\def\a{\alpha}
\def\r{\gamma}
\def\b{\beta}
\def\Z{\mathbb{Z}}
\def\N{\mathbb{N}}
\def\C{\mathbb{C}}
\numberwithin{equation}{section}
\newtheorem{theo}{Theorem}[section]
\newtheorem{defi}[theo]{Definition}
\newtheorem{coro}[theo]{Corollary}
\newtheorem{lemm}[theo]{Lemma}
\newtheorem{prop}[theo]{Proposition}
\newtheorem{case}{Case}
\begin{document}

\title[Tensor product modules over the affine-Virasoro algebra of type $A_1$]{A new class of irreducible modules over the affine-Virasoro algebra of type $A_1$}

\author{Qiu-Fan Chen, Yu-Feng Yao}

\address{Department of Mathematics, Shanghai Maritime University,
 Shanghai, 201306, China.}\email{chenqf@shmtu.edu.cn}
\address{Department of Mathematics, Shanghai Maritime University,
 Shanghai, 201306, China.}\email{yfyao@shmtu.edu.cn}

\subjclass[2010]{17B10, 17B65, 17B68, 17B70}

\keywords{affine-Virasoro algebra, tensor product, non-weight module, highest weight module}

\thanks{This work is supported by National Natural Science Foundation of China (Grant Nos. 11801363, 11771279 and 12071136).}

\begin{abstract}
In this paper, we construct a class of non-weight modules over the affine-Virasoro algebra of type $A_1$ by taking tensor products of a finite number of irreducible modules  $M(\lambda, \alpha, \beta, \gamma)$ with irreducible highest weight modules $V(\eta, \epsilon, \theta)$. We obtain the necessary and sufficient conditions for such tensor product modules to be irreducible, and determine the necessary and sufficient conditions for such two modules to be isomorphic. We also compare these modules with other known non-weight modules, showing that  these irreducible modules are new.
\end{abstract}

\maketitle

\setcounter{tocdepth}{1}\tableofcontents
\begin{center}
\end{center}

\section{Introduction}
Throughout the paper, we denote by $\C ,\,\Z,\,\C^*,\,\Z_+,\,\N$ the sets of complex numbers, integers, nonzero complex numbers, nonnegative integers and positive integers, respectively. All algebras (modules, vector spaces) are assumed to be  over $\C$. For a Lie algebra $\mathfrak{g}$, we use $U(\mathfrak{g})$ to denote the universal enveloping algebra of $\mathfrak{g}$.  More generally, for a subset $X$ of $\mathfrak{g}$, we use $U(X)$ to denote the universal enveloping algebra of the subalgebra of $\mathfrak{g}$ generated by $X$.

It is well known that representation theory of the Virasoro algebra and affine Kac-Moody Lie algebras plays an important role both in physics and in mathematics. The Virasoro algebra acts on any (except when the level is negative the dual coxeter number) highest weight module of the affine Lie algebra through the use of the famous Sugawara operators.  The affine Lie algebras admit representations on the Fock space and hence admit representations of the Virasoro algebra. This close
relationship strongly suggests that they should be considered simultaneously, i.e., as one algebraic structure, and hence has led to the definition of the so-called affine-Virasoro algebra. It is known that in literature this algebra is also named the conformal current algebra \cite{CK,K}, the entire gauge algebra \cite{EFK}. The physical context in which the affine-Virasoro algebra appears
is a two-dimensional conformal field theory on the circle with an internal symmetry algebra. In particular, the even part of  the $N=3$ superconformal algebra \cite{CL} is just the affine-Virasoro algebra of type $A_1$. The affine-Virasoro algebra of type $A_1$, denoted by $\mathfrak L$,  is defined as the Lie algebra with $\C$-basis $\{e_i,\, f_i,\,h_i,\, d_i,\,C\mid i\in\Z\}$ subject to the following Lie brackets:
\begin{equation*}\label{L-action}
\aligned
&[e_i,f_j]=h_{i+j}+i\delta_{i+j,0}C,\\
&[h_i,e_j]=2e_{i+j},\quad [h_i,f_j]=-2f_{i+j},\\
&[d_i,d_j]=(j-i)d_{i+j}+\delta_{i+j,0}\frac{i^3-i}{12}C,\\
&[d_i,h_j]=jh_{i+j},\quad [h_i,h_j]=-2i\delta_{i+j,0}C,\\
&[d_i,e_j]=je_{i+j},\quad [d_i,f_j]=jf_{i+j},\\
&[e_i,e_j]=[f_i,f_j]=[C,\mathfrak{L}]=0.
\endaligned
\end{equation*}
It is clear that  $\mathfrak{h}:=\C d_0+\C h_0+\C C$ is the Cartan subalgebra  of $\mathfrak{L}$. Moreover, $\mathfrak L$ admits a triangular decomposition:
$$\mathfrak{L}=\mathfrak{L}_{-}\oplus\mathfrak{h}\oplus\mathfrak{L}_{+},$$
where
\begin{eqnarray*}
\mathfrak{L}_{-}={\rm span\,}_{\C}\{e_{-i}, f_{-i},  h_{-i}, d_{-i}, f_0 \mid i\in\N\}
\end{eqnarray*}
and \begin{eqnarray*}\mathfrak{L}_{+}={\rm span\,}_{\C}\{e_{i}, f_{i},  h_{i}, d_{i}, e_0\mid i\in\N\}.
\end{eqnarray*}

 Highest weight representations and integrable  representations of the affine-Virasoro algebras have been  extensively studied  (cf.~\cite{B}, \cite{EJ}, \cite{GHL}, \cite{JY}-\cite{LQ},  \cite{XH}). Quite recently, the authors  gave the classification of irreducible quasi-finite modules  over the affine-Virasoro algebras in \cite{LPX}.

In recent years, many authors constructed various irreducible non-Harish-Chandra modules and irreducible non-weight modules (cf.~\cite{BM}, \cite{CTZ, CZ}, \cite{HCS}, \cite{MW}-\cite{TZ}). In particular, J. Nilsson \cite{N} constructed  a class of  $\mathfrak{sl_{n+1}}$-modules that are free of rank one when restricted to the Cartan subalgebra.  Since then, this kind of non-weight modules, which many authors call $U(\mathfrak{h})$-free modules, have been extensively studied. Especially, the authors classified the $U(\mathfrak{h})$-free modules of rank one for  $\mathfrak{L}$ in \cite{CH1}. Moreover, the irreducibility and isomorphism classes of these modules were determined therein. However, the theory of representation over $\mathfrak{L}$ is far more from being well-developed.

It is well known that an important way to construct new modules over an algebra is to consider the linear tensor product of known modules
over the algebra (cf.~ \cite{CG2, CHSY}, \cite{GLW}, \cite{TZ1}, \cite{TZ2}, \cite{Z}). As a follow-up of our previous paper \cite{CY}, the purpose of the present paper is to construct new irreducible
non-weight $\mathfrak L$-modules by taking tensor products of a finite number of irreducible modules  defined in \cite{CH1} with irreducible highest weight modules.

This paper is organized as follows. In Section 2, we recall the definitions of various modules involved in this paper and some basic known results on them.  In Section 3, we obtain a class of $\mathfrak L$-modules by taking tensor products of several modules of type $M(\lambda, \alpha, \beta, \gamma)$ and the irreducible highest modules $V(\eta, \epsilon, \theta)$. We also determine the necessary and sufficient conditions for such tensor product modules to be irreducible and study their  submodule structures when they are reducible. Section 4 is devoted to determining the necessary and sufficient conditions for two such irreducible tensor modules to be isomorphic. In Section 5, we compare the irreducible tensor product modules constructed in Section 3 with other known non-weight irreducible modules, and conclude that these irreducible modules provide a class of new irreducible $\mathfrak L$-modules.
\section{Preliminaries and related known results}\label{pre}
We first recall the definitions of some $\mathfrak L$-modules considered in this paper. Denote by $\C[s,t]$ the polynomial algebra in variables $s$ and $t$ with coefficients in $\C$.
\begin{defi}\label{defi2.2} For $\lambda,\a\in\C^*, \b,\r\in\C, i\in\Z$ and $g(s,t)\in\C[s,t]$, define the $\mathfrak{L}$-module action on $\C[s,t]$ as follows:
\begin{align*}
\Omega(\lambda,\a, \b, \r):&\ \ \ \ e_i(g(s,t))=\lambda^i\a g(s-i,t-2),\\
& \ \ \ \ f_i(g(s,t))=-\frac{\lambda^i}\a(\frac{t}{2}-\b)(\frac{t}{2}+\b+1)g(s-i,t+2),\\
& \ \ \ \ h_i(g(s,t))=\lambda^itg(s-i,t),\ \ \ \ d_i(g(s,t))=\lambda^i(s+i\r)g(s-i,t),\\
& \ \ \ \ C(g(s,t))=0;\\
\Delta(\lambda, \a, \b, \r):&\ \ \ \ e_i(g(s,t))=-\frac{\lambda^i}{\a}(\frac{t}{2}+\b)(\frac{t}{2}-\b-1)g(s-i,t-2),\\
& \ \ \ \ f_i(g(s,t))=\lambda^i\a g(s-i,t+2), \ \ \ \ h_i(g(s,t))=\lambda^{i}tg(s-i,t),\\
& \ \ \ \ d_i(g(s,t))=\lambda^i(s+i\r)g(s-i,t),\\
& \ \ \ \ C(g(s,t))=0;\\
\Theta(\lambda, \a, \b, \r):&\ \ \ \ e_i(g(s,t))=\lambda^i\a(\frac{t}{2}+\b)g(s-i,t-2),\\
& \ \ \ \ f_i(g(s,t))=-\frac{\lambda^i}{\a}(\frac{t}{2}-\b)g(s-i,t+2),\\
& \ \ \ \ h_i(g(s,t))={\lambda^i}tg(s-i,t),\ \ \ \ d_i(g(s,t))=\lambda^i(s+i\r)g(s-i,t),\\
& \ \ \ \ C(g(s,t))=0.
\end{align*}
\end{defi}
These modules were introduced in \cite{CH1} to characterize the $U(\mathfrak{h})$-free modules of rank one for  $\mathfrak{L}$.
It is worthwhile to point out that $\C[s,t]$ in each case has the same module structure over the subalgebra $\text{span}\{h_i,d_i,C\mid i\in\Z\}$.
%
%
For later use, we need the following known result on conditions for irreducibility and a classification of isomorphism classes for the modules constructed above.
\begin{prop}\label{pop1}(cf. \cite{CH1})
Keep notations as above, then the following statements hold.
\begin{itemize}
\item[(1)]
$\Omega(\lambda,\a, \b, \r)$ and $\Delta(\lambda, \a, \b, \r)$ are irreducible for any $\lambda,\a\in\C^*$ and $\b,\r\in\C;$ $\Theta(\lambda, \a, \b, \r)$ is irreducible if and only if $2\b \notin \Z_+$.
\item[(2)]
Let $\lambda_1, \lambda_2, \a_1,\a_2\in\mathbb{C}^*,\b_1, \b_2,\r_1, \r_2\in\mathbb{C}$. Then
$$\Omega(\lambda_1,\a_1,\b_1,\r_1), \Delta(\lambda_1,\a_1,\b_1,\r_1), \Theta(\lambda_1,\a_1,\b_1,\r_1)$$ are pairwise non-isomorphic for all parameter choices. Moreover,
\begin{eqnarray*} \label{xxit11}&\Omega(\lambda_1,\a_1,\b_1,\r_1)\cong
\Omega(\lambda_2, \a_2,\b_2,\r_2)\Longleftrightarrow & (\lambda_1,\a_1,\b_1,\r_1)=(\lambda_2,\a_2,\b_2,\r_2)\\
&&{\rm or}\,\, (\lambda_1,\a_1,\b_1,\r_1)=(\lambda_2,\a_2,-\b_{2}-1,\r_2);\nonumber\\
\label{xxit1}&\Delta(\lambda_1,\a_1,\b_1,\r_1)\cong
\Delta(\lambda_2,\a_2,\b_2,\r_2)\Longleftrightarrow & (\lambda_1,\a_1,\b_1,\r_1)=(\lambda_2,\a_2,\b_2,\r_2)\\
&&{\rm or}\,\, (\lambda_1,\a_1,\b_1,\r_1)=(\lambda_2,\a_2,-\b_{2}-1,\r_2);\nonumber\\
\label{xxit2}&\Theta(\lambda_1,\a_1,\b_1,\r_1)\cong\Theta(\lambda_2,\a_2,\b_2,\r_2)\Longleftrightarrow & (\lambda_1,\a_1,\b_1,\r_1)=(\lambda_2,\a_2,\b_2,\r_2).\end{eqnarray*}
\end{itemize}
\end{prop}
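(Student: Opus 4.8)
The plan is to handle all three modules through a single structural observation: in every case $d_0$ and $h_0$ act as multiplication by $s$ and $t$ respectively, while $C$ acts as $0$. Hence any nonzero $\mathfrak{L}$-submodule $W\subseteq\C[s,t]$ is automatically stable under multiplication by $s$ and $t$, i.e.\ $W$ is a nonzero ideal of $\C[s,t]$; consequently $W=\C[s,t]$ as soon as $W$ contains a nonzero constant. So for the irreducibility assertions in (1) the entire task is to manufacture the constant $1$ inside an arbitrary nonzero $W$, whereas for reducibility one must instead exhibit a proper nonzero shift-stable ideal.

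First I would carry out an $s$-reduction valid for all three modules, since $h_i,d_i,C$ act identically in $\Omega,\Delta,\Theta$. The operator $\la^{-1}h_1-h_0$ sends $g(s,t)\mapsto t\bigl(g(s-1,t)-g(s,t)\bigr)$, that is, multiplication by $t$ composed with the backward difference in $s$; applied $n$ times to an element of $s$-degree $n$ with top coefficient $a_n(t)$ it produces $(-1)^n n!\,t^n a_n(t)\in W\cap\C[t]$, which is nonzero. Thus every nonzero $W$ meets $\C[t]$ nontrivially, and it only remains to lower the $t$-degree.

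For $\Omega$ (resp.\ $\Delta$) the operator $\a^{-1}e_0$ (resp.\ $\a^{-1}f_0$) acts on $\C[t]$ as the pure shift $p(t)\mapsto p(t\mp 2)$, so $(\a^{-1}e_0-1)^{\deg p}$ collapses any $p\in W\cap\C[t]$ to a nonzero constant, proving irreducibility for all parameters. The genuinely delicate case is $\Theta$, where $e_0,f_0$ carry the extra linear factors $\tfrac t2+\b$ and $\tfrac t2-\b$ and therefore raise degree. Here I would argue by minimal degree. Let $d$ be the least degree of a nonzero $p\in W\cap\C[t]$, and note that $\tfrac12 h_0+\b$ and $\tfrac12 h_0-\b$ give multiplication by $\tfrac t2\pm\b$. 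The leading terms of $\a^{-1}e_0(p)=(\tfrac t2+\b)p(t-2)$ and of $(\tfrac t2+\b)p(t)$ cancel after one adds $d\,p$, so $\a^{-1}e_0(p)-(\tfrac t2+\b)p+d\,p\in W\cap\C[t]$ has degree $<d$ and must vanish; the analogous combination with $f_0$ gives a second relation. Writing them out, if $d\ge 1$ then
\begin{align*}
(\tfrac t2+\b)\,p(t-2)&=(\tfrac t2+\b-d)\,p(t),\\
(\tfrac t2-\b)\,p(t+2)&=(\tfrac t2-\b+d)\,p(t).
\end{align*}
Encoding the roots of $p$ (in the variable $\tfrac t2$) as exponents in the group algebra of $(\C,+)$ and cancelling the nonzerodivisor $x-1$, the first identity forces $p$ to have exactly the simple roots $\tfrac t2\in\{-\b,-\b+1,\dots,-\b+d-1\}$; substituting into the second identity collapses everything to $d(2\b-d+1)=0$, i.e.\ $2\b=d-1\in\Z_+$. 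Hence if $2\b\notin\Z_+$ no positive minimal degree can occur, $1\in W$, and $\Theta$ is irreducible. Conversely, when $2\b=m\in\Z_+$ I would exhibit the proper submodule $r(t)\,\C[s,t]$ with $r(t)=\prod_{k=0}^{m}\bigl(\tfrac t2-\tfrac m2+k\bigr)$: a direct telescoping gives $(\tfrac t2+\b)r(t-2)=(\tfrac t2-\tfrac m2-1)r(t)$ and $(\tfrac t2-\b)r(t+2)=(\tfrac t2+\tfrac m2+1)r(t)$, so this ideal is stable under $e_i,f_i$ and trivially under $h_i,d_i$, and it is proper since $\deg r\ge 1$.

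For part (2) the decisive point is rigidity: any isomorphism $\phi$ between two of these modules intertwines the common actions of $d_0$ and $h_0$, hence commutes with multiplication by $s$ and $t$, so $\phi$ equals multiplication by $\phi(1)$, and invertibility forces $\phi(1)\in\C^*$. Thus $\phi$ is a nonzero scalar, and two modules are isomorphic exactly when their $\mathfrak{L}$-actions coincide identically. Matching the coefficients in $d_i$ and $h_i$ forces $\la_1=\la_2$ and $\r_1=\r_2$, and matching $e_i$ then forces $\a_1=\a_2$; the dependence on $\b$ is read off from $e_i,f_i$, entering for $\Omega,\Delta$ only through the symmetric quantity $\b(\b+1)$ (whence $\b_1\in\{\b_2,-\b_2-1\}$) but through the linear factor $\tfrac t2\pm\b$ for $\Theta$ (whence $\b_1=\b_2$). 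Pairwise non-isomorphism of the three families is then immediate, since $e_i(1)$ is a polynomial in $t$ of degree $0,1,2$ for $\Omega,\Theta,\Delta$ respectively and these degrees cannot agree. I expect the $\Theta$-irreducibility---the minimal-degree functional equations and the root cancellation pinning down $2\b=d-1$---to be the main obstacle, the remainder being bookkeeping built on the ``isomorphism $=$ scalar'' rigidity.
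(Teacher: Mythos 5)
The paper never proves this proposition: it is imported from \cite{CH1} as known background, so there is no in-paper argument to compare yours against, and your proposal has to be judged on its own terms. On those terms it is correct. The rigidity observation --- in all three families $d_0$, $h_0$ act as multiplication by $s$, $t$ and $C$ acts as $0$, so every submodule is an ideal of $\C[s,t]$ and every isomorphism commutes with multiplication by $\C[s,t]$, hence equals a nonzero scalar --- is exactly the right backbone: it reduces part (1) to producing a nonzero constant inside an arbitrary nonzero submodule, and part (2) to the statement that two such modules are isomorphic iff their actions coincide identically, after which the parameter conditions (the dependence on $\beta$ only through $\beta(\beta+1)$ for $\Omega,\Delta$ versus through $\tfrac t2\pm\beta$ for $\Theta$, and the cross-family obstruction from the $t$-degree of $e_i(1)$ being $0,1,2$) are routine coefficient matching. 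Your reduction operator $\lambda^{-1}h_1-h_0$ (multiplication by $t$ composed with backward difference in $s$) does force $W\cap\C[t]\neq 0$; the $\Omega,\Delta$ cases then collapse under the pure shifts $\alpha^{-1}e_0$, $\alpha^{-1}f_0$; and the minimal-degree analysis for $\Theta$ is sound: both functional equations follow from minimality of $d$, the first pins down the simple-root multiset $\{-\beta,-\beta+1,\ldots,-\beta+d-1\}$, the second then forces $2\beta=d-1$, and your explicit ideal $r(t)\,\C[s,t]$ for $2\beta=m\in\Z_+$ is indeed $e_i$-, $f_i$-, $h_i$-, $d_i$-stable by the telescoping identities, giving the converse. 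The one step you should justify rather than assert is the cancellation of $x-1$ in the group algebra of $(\C,+)$: it suffices to note that the subgroup generated by $1$ and $\beta$ is finitely generated torsion-free, so the relevant group algebra is a Laurent polynomial ring, hence a domain (alternatively, avoid the group algebra entirely and peel off roots inductively by evaluating the identity at $u=-\beta,-\beta+1,\ldots$). Methodologically, what you have written is the standard degree-reduction-plus-rigidity toolkit for $U(\mathfrak{h})$-free modules, the same toolkit this paper itself uses for the tensor modules (degree reduction as in Proposition \ref{prop2}, coefficient-matching rigidity as in Theorem \ref{theoo2}), and it is presumably close in spirit to the proof given in \cite{CH1}.
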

For any $\eta, \epsilon, \theta\in\C$, let $I(\eta, \epsilon, \theta)$ be the left ideal of $U(\mathfrak{L})$ generated by the following elements
$$\{e_0, e_{i}, f_{i},  h_{i}, d_{i}\mid i\in\N\}\cup\{d_0-\eta, h_0-\epsilon, C-\theta\}. $$
The Verma $\mathfrak{L}$-module with highest weight $(\eta,  \epsilon, \theta)$ is defined as the quotient module
\begin{equation*}\overline{V}(\eta, \epsilon, \theta)=U(\mathfrak{L})/I(\eta, \epsilon, \theta).\end{equation*}
By the PBW theorem, $\overline{V}(\eta, \epsilon, \theta)$ has a basis  consisting of all vectors of the form
\begin{equation*}f_{-q}^{F_{-q}}\cdots f_0^{F_0}e_{-p}^{E_{-p}}\cdots e_{-1}^{E_{-1}}h_{-m}^{H_{-m}}\cdots h_{-1}^{H_{-1}}d_{-n}^{D_{-n}}\cdots d_{-1}^{D_{-1}} v_h ,\end{equation*}
where $v_h$ is the coset of $1$ in $\overline{V}(\eta, \epsilon, \theta)$, and $$D_{-1},\ldots,D_{-n}, H_{-1}, \ldots, H_{-m}, E_{-1}, \ldots, E_{-p}, F_0,  \ldots, F_{-q}\in\Z_+.$$
Then we have the irreducible  highest weight module $V(\eta, \epsilon, \theta)=\overline{V}(\eta, \epsilon, \theta)/J$, where $J$ is the unique maximal proper submodule of $\overline{V}(\eta, \epsilon, \theta)$. Readers can refer to \cite{B, K} for the structure of $V(\eta, \epsilon, \theta)$.

In the rest of this paper, we will always assume $\lambda,\a\in\C^*, \b,\r,\eta, \epsilon, \theta \in\C$, $M(\lambda, \alpha, \beta, \gamma)=\Omega(\lambda, \alpha, \beta, \gamma)$, $\Delta(\lambda, \alpha, \beta, \gamma)$ or $\Theta(\lambda, \alpha, \beta, \gamma)$  ($2\b \notin \Z_+$) constructed in Definition \ref{defi2.2}, and  $V(\eta, \epsilon, \theta)$ is an  irreducible  highest weight  $\mathfrak{L}$-module.
\section{Irreducibility of tensor product modules}
In this section, we investigate the structures of the tensor products of the irreducible  $\mathfrak L$-modules defined in the previous section, that is, the modules $M(\lambda, \alpha, \beta, \gamma)$ and $V(\eta, \epsilon, \theta)$. In particular, we  determine their irreducibility.

Let $s_1, s_2, \ldots, s_m, t_1, t_2, \ldots, t_m$ be commuting variables, where $m\in\N$. Fix any complex numbers $\lambda_k, \alpha_k\in\C^*, \beta_k, \gamma_k\in\C$, $1\leq k \leq m$, we have the modules  $M(\lambda_k, \alpha_k, \beta_k, \gamma_k)$ defined as in Definition \ref{defi2.2}. Note also that  $2\b_k \notin \Z_+$ when $M=\Theta$. Then we have  $M(\lambda_k, \alpha_k, \beta_k, \gamma_k)=\C[s_k, t_k]$ for $1\leq k \leq m$ as vector spaces. For an irreducible  highest weight module $V(\eta, \epsilon, \theta)$, we can form the tensor product
\begin{equation}\label{a122}\mathbf{T}(\bm{\lambda}, \bm{\alpha}, \bm{\beta}, \bm{\gamma},  \eta, \epsilon, \theta)^{M}=(\otimes_{k=1}^mM(\lambda_k, \alpha_k, \beta_k, \gamma_k))\otimes V(\eta, \epsilon, \theta),\end{equation}
where $\bm{\lambda}=(\lambda_1, \ldots, \lambda_m), \bm{\alpha}=(\alpha_1, \ldots, \alpha_m), \bm{\beta}=(\beta_1, \ldots, \beta_m), \bm{\gamma}=(\gamma_1, \ldots, \gamma_m)$. Denote simply $\mathbf{T}^{M}:=\mathbf{T}(\bm{\lambda}, \bm{\alpha}, \bm{\beta}, \bm{\gamma}, \eta, \epsilon, \theta)^{M}$. Take any nonzero element
\begin{equation*}\label{a12}g(s_1, s_2, \ldots, s_m, t_1, t_2, \ldots, t_m)=\sum_{(\mathbf{p}, \mathbf{q})\in E}s_1^{p_1}t_1^{q_1}\otimes\cdots\otimes s_m^{p_m}t_m^{q_m}\otimes v_{(\mathbf{p}, \mathbf{q})}\in \mathbf{T}^{M},\end{equation*}
where $(\mathbf{p}, \mathbf{q})=(p_1, p_2, \ldots, p_m, q_1, q_2, \ldots, q_m), v_{(\mathbf{p}, \mathbf{q})}\in V(\eta, \epsilon, \theta)\setminus \{0\}$ and $E$ is a finite subset of $\Z_+^{2m}$.  For convenience, we may write
$\mathbf{s}^{\mathbf{p}}\mathbf{t}^{\mathbf{q}}=s_1^{p_1}t_1^{q_1}\otimes\cdots \otimes s_m^{p_m}t_m^{q_m}$
and the element $g(\mathbf{s}, \mathbf{t})\in \mathbf{T}^{M}$ can be rewritten as
\begin{equation}\label{a123}g(\mathbf{s}, \mathbf{t})=\sum_{(\mathbf{p}, \mathbf{q})\in E}\mathbf{s}^{\mathbf{p}}\mathbf{t}^{\mathbf{q}}\otimes v_{(\mathbf{p}, \mathbf{q})}.\end{equation}
Denote $P_k= {\rm max\,}\{p_k\mid (\mathbf{p}, \mathbf{q})\in E\}$ for all $1\leq k \leq m$. We simply write $(\mathbf{s}^{\mathbf{p}}\mathbf{t}^{\mathbf{q}})(\mathbf{s}^{\mathbf{p\prime}}\mathbf{t}^{\mathbf{q\prime}})=\mathbf{s}^{\mathbf{p}+\mathbf{p\prime}}\mathbf{t}^{\mathbf{q}+\mathbf{q\prime}}$ for short in the following.

For any positive integer $l$,  we can define a total order $``\succ"$ on $\Z^l$ by
$$(a_1, \ldots, a_l) \succ (b_1, \ldots, b_l) \Longleftrightarrow  \mathrm{ there\ exists} \ k\in\N \ \mathrm{such \ that}  \ a_k>b_k \ \mathrm{and}\ a_{i}=b_{i},\ \forall\, 1\leq i<k.$$
Then we define the degree ${\rm deg\,}(g(\mathbf{s}, \mathbf{t}))$ of $g(\mathbf{s}, \mathbf{t})$ as the maximal $(\mathbf{p}, \mathbf{q})\in E$ with $v_{(\mathbf{p}, \mathbf{q})}\neq0$. Note that ${\rm deg\,}(1\otimes \cdots \otimes 1\otimes v)=\mathbf{0}=(0, 0,\ldots, 0)$ for $v\in V(\eta, \epsilon, \theta)\setminus \{0\}$.

We need the following two crucial results which will be used repeatedly throughout the paper.
\begin{lemm}\label{prop1}(cf. \cite{TZ2})
Let $\lambda_1, \lambda_2, \ldots, \lambda_m\in\C^*$,  $s_1, s_2, \ldots, s_m\in\N$ with $s_1+s_2+\cdots+s_m=s$. Define a sequence of functions on $\Z$ as follows: $f_1(n)=\lambda_1^n, f_2(n)=n\lambda_1^n, \ldots, f_{s_1}(n)=n^{s_1-1}\lambda_1^n, f_{s_1+1}(n)=\lambda_2^n, \ldots, f_{s_1+s_2}(n)=n^{s_2-1}\lambda_2^n, \ldots, f_{s}(n)=n^{s_m-1}\lambda_m^n$. Let $\mathfrak{M}=(y_{pq})$ be the $s\times s$ matrix with $y_{pq}=f_q(p-1)$, $q=1, 2, \ldots, s, p=r+1, r+2, \ldots, r+s$ where $r\in\Z_+$. Then
\begin{equation*}det(\mathfrak{M})=\prod_{j=1}^m(s_j-1)!!\lambda_j^{s_j(s_j+2r-1)/2}\prod_{1\leq i<j\leq m}(\lambda_j-\lambda_i)^{s_is_j},\end{equation*}
where $s_j!!=s_j!\times (s_j-1)!\times\cdots \times 2!\times 1!$ with $0!!=1$.
\end{lemm}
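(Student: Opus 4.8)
The plan is to first strip off the dependence on $r$ by unimodular column operations, and then to recognize the remaining determinant as a confluent Vandermonde determinant, which I would evaluate by degenerating the ordinary Vandermonde determinant. To reduce to $r=0$, I work inside the block of $s_j$ columns attached to $\lambda_j$ and replace the column functions $n^k$ ($0\le k\le s_j-1$) by $(n-r)^k$; since $(n-r)^k=n^k+(\text{lower powers of }n)$, this is a lower-triangular column operation with $1$'s on the diagonal and so leaves $\det\mathfrak{M}$ unchanged. Every entry of the $\lambda_j$-block now has the form $(n-r)^k\lambda_j^{\,n}=\lambda_j^{\,r}\,(n-r)^k\lambda_j^{\,n-r}$, so I pull the scalar $\lambda_j^{\,r}$ out of each of the $s_j$ columns, producing the global factor $\prod_j\lambda_j^{\,rs_j}$. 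Setting $n'=n-r\in\{0,1,\dots,s-1\}$, what is left is precisely the $r=0$ matrix, whose entries are $n^k\lambda_j^{\,n}$ for $n=0,\dots,s-1$. As $rs_j+\tfrac{s_j(s_j-1)}2=\tfrac{s_j(s_j+2r-1)}2$, it therefore suffices to prove that this $r=0$ determinant $D$ equals $\prod_j (s_j-1)!!\,\lambda_j^{s_j(s_j-1)/2}\prod_{i<j}(\lambda_j-\lambda_i)^{s_is_j}$.

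For the $r=0$ case I would argue by confluence. I introduce $s$ distinct auxiliary nodes $x_{j,l}=\lambda_j e^{l\epsilon}$ ($1\le j\le m$, $0\le l\le s_j-1$) and form the ordinary Vandermonde matrix with columns $(x_{j,l}^{\,0},\dots,x_{j,l}^{\,s-1})^{\top}$, whose determinant is the product $\prod(x_{j',l'}-x_{j,l})$ over lexicographically ordered pairs of nodes. I then let $\epsilon\to0$ and match leading powers of $\epsilon$ on the two sides. On the product side a within-block difference is $x_{j,l'}-x_{j,l}=\lambda_j(l'-l)\epsilon+O(\epsilon^2)$, so the within-block pairs contribute $\prod_j\lambda_j^{s_j(s_j-1)/2}\,(s_j-1)!!\,\epsilon^{s_j(s_j-1)/2}$ at leading order, using that the Vandermonde on $\{0,1,\dots,s_j-1\}$ equals $\prod_{d=1}^{s_j-1}d!=(s_j-1)!!$; each cross-block difference tends to $\lambda_{j'}-\lambda_j$, contributing $\prod_{i<j}(\lambda_j-\lambda_i)^{s_is_j}$. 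Thus the product side is $\big[\prod_j(s_j-1)!!\,\lambda_j^{s_j(s_j-1)/2}\prod_{i<j}(\lambda_j-\lambda_i)^{s_is_j}\big]\,\epsilon^{\sum_j s_j(s_j-1)/2}+O(\epsilon^{1+\sum_j s_j(s_j-1)/2})$.

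On the determinant side I expand $x_{j,l}^{\,n}=\sum_{k\ge0}\tfrac{(l\epsilon)^k}{k!}\,n^k\lambda_j^{\,n}$ and use multilinearity, so that each column selects a power $k$ of the target column $C_{j,k}:=(n^k\lambda_j^{\,n})_n$. Since two equal columns $C_{j,k}$ kill the determinant, within each block the selected indices must be distinct, whence the minimal total $\epsilon$-power is again $\sum_j\binom{s_j}{2}$, attained exactly when each block uses the index set $\{0,1,\dots,s_j-1\}$. Collecting these terms, the coefficient of $\epsilon^{\sum_j\binom{s_j}{2}}$ equals $D\cdot\prod_j\det\!\big(\tfrac{l^k}{k!}\big)_{0\le k,l\le s_j-1}=D$, because each block factor $\det(l^k/k!)=\tfrac1{(s_j-1)!!}\cdot(s_j-1)!!=1$. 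Equating the two leading coefficients gives the $r=0$ formula, and combined with the reduction of the first step it yields the general statement.

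The main obstacle is the rigorous leading-order extraction on the determinant side: I must justify through the multilinear expansion, together with the linear-dependence observation that a repeated column $C_{j,k}$ annihilates the determinant, that the vanishing order in $\epsilon$ is exactly $\sum_j\binom{s_j}{2}$ and that the surviving terms factor cleanly block by block. The delicate bookkeeping is tracking the numerical constant: the within-block transformation determinant $\det(l^k/k!)$ collapses to $1$, while the genuine constant $(s_j-1)!!$ is produced instead on the Vandermonde-product side, and it is precisely this matching that forces the superfactorial $\prod_j(s_j-1)!!$ in the final answer.
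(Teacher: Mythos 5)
Your proof is correct, but there is nothing in the paper to compare it against: the lemma is stated with ``(cf.\ \cite{TZ2})'' and imported wholesale from Tan--Zhao, with no argument given, so your writeup is a self-contained substitute for a citation rather than an alternative to a proof the authors actually wrote out. Both of your steps check out. The reduction to $r=0$ is a unipotent column operation inside each $\lambda_j$-block (replacing $n^k$ by $(n-r)^k$ only adds multiples of lower columns of the same block), after which extracting $\lambda_j^{r}$ from each of the $s_j$ columns gives $\prod_j\lambda_j^{rs_j}$, and indeed $rs_j+\binom{s_j}{2}=s_j(s_j+2r-1)/2$. In the confluence step, the product side is right because the Vandermonde determinant of the nodes $0,1,\dots,s_j-1$ equals $\prod_{b=1}^{s_j-1}b!=(s_j-1)!!$ in the paper's superfactorial notation, and the cross-block differences converge to $\lambda_j-\lambda_i$ in the correct order, so no signs are lost; the determinant side works because a repeated column $C_{j,k}$ kills a term, so at order $N=\sum_j\binom{s_j}{2}$ each block must use the exponent set $\{0,\dots,s_j-1\}$, and the sum over block bijections assembles into $\prod_j\det\bigl(l^k/k!\bigr)_{0\le k,l\le s_j-1}=\prod_j (s_j-1)!!\big/\prod_{k=0}^{s_j-1}k!=1$, leaving exactly $D$. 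The one detail still owed in a careful writeup is the interchange justifying term-by-term extraction of the $\epsilon^N$-coefficient through the infinite expansions $x_{j,l}^{\,n}=\sum_{k\ge0}\frac{(l\epsilon)^k}{k!}\,n^k\lambda_j^{\,n}$: truncate each column's series at order $\epsilon^N$ (multilinearity makes the discarded tails contribute only $O(\epsilon^{N+1})$), and note that on the determinant side you only need ``order at least $N$ with $\epsilon^N$-coefficient $D$,'' since the product side supplies the exact leading term; this also makes the formula degenerate correctly when some $\lambda_i=\lambda_j$, where both sides vanish. For comparison, the standard algebraic route to such confluent Vandermonde identities avoids limits altogether: regard both sides as polynomials, prove divisibility by each $(\lambda_j-\lambda_i)^{s_is_j}$, and fix the remaining monomial and constant by degree count and specialization. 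That route is shorter to make rigorous; your degeneration argument is longer but explains transparently where both the superfactorials and the exponents $s_j(s_j+2r-1)/2$ come from.
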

\begin{prop}\label{prop2}
Suppose that $\lambda_1, \ldots, \lambda_m$ are distinct. Let $W$ be a subspace of $\mathbf{T}^{M}$ which is stable under the action of $h_i,\, d_i$ for any $i$ sufficiently large. Then for any $g(\mathbf{s}, \mathbf{t})=\sum_{(\mathbf{p}, \mathbf{q})\in E}\mathbf{s}^{\mathbf{p}}\mathbf{t}^{\mathbf{q}}\otimes v_{(\mathbf{p}, \mathbf{q})}\in W$,  we have
\begin{eqnarray}\label{111}\sum_{(\mathbf{p}, \mathbf{q})\in E, p_k=P_k}\mathbf{s}^{\mathbf{p}-P_k\omega_k}\mathbf{t}^{\mathbf{q}+\omega_k}\otimes v_{(\mathbf{p}, \mathbf{q})}\in W,\\
\label{222}t_kg(\mathbf{s}, \mathbf{t})=\sum_{(\mathbf{p}, \mathbf{q})\in E}\mathbf{s}^{\mathbf{p}}\mathbf{t}^{\mathbf{q}+\omega_k}\otimes v_{(\mathbf{p}, \mathbf{q})}\in W,\\
\label{3335}s_kg(\mathbf{s}, \mathbf{t})=\sum_{(\mathbf{p}, \mathbf{q})\in E}\mathbf{s}^{\mathbf{p}+\omega_k}\mathbf{t}^{\mathbf{q}}\otimes v_{(\mathbf{p}, \mathbf{q})}\in W,
\end{eqnarray}
where $\omega_k=(\delta_{k, 1}, \delta_{k, 2},\ldots, \delta_{k, m})$ and $k=1, \ldots, m$.\end{prop}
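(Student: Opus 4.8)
The plan is to extract, from the given subspace $W$, enough independent information about the leading terms of $g(\mathbf{s},\mathbf{t})$ to conclude that the three prescribed elements land back in $W$. The mechanism is the action of the operators $h_i$ and $d_i$ for large $i$, which on each tensor factor act by $h_i(g(s_k,t_k))=\lambda_k^i t_k g(s_k-i, t_k)$ and $d_i(g(s_k,t_k))=\lambda_k^i(s_k+i\gamma_k)g(s_k-i,t_k)$, together with the highest-weight factor $V(\eta,\epsilon,\theta)$ on which $h_i$ and $d_i$ act as $0$ once $i$ exceeds the (finite) depth of every $v_{(\mathbf{p},\mathbf{q})}$ appearing in $g$. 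The crucial simplifying observation is that for $i$ large enough all the $V$-contributions vanish, so the action reduces to operators on $\bigotimes_k\C[s_k,t_k]$ twisted by the scalars $\lambda_k^i$.

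First I would compute $h_i g(\mathbf{s},\mathbf{t})$ and $d_i g(\mathbf{s},\mathbf{t})$ for such large $i$ explicitly: each monomial $\mathbf{s}^{\mathbf{p}}\mathbf{t}^{\mathbf{q}}\otimes v$ produces, under a product $h_{i_1}\cdots h_{i_N}$ or mixed products of $h$'s and $d$'s, a linear combination whose coefficients are polynomial in the $i_j$ and carry the factor $\prod_k\lambda_k^{(\text{sum of the }i_j)}$ depending on which tensor slot each operator hits. The key step is to choose a family of such iterated operators and set up a Vandermonde-type linear system in the variables $\lambda_1^i,\dots,\lambda_m^i$ (and their powers weighted by polynomials in $i$). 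This is exactly where Lemma~\ref{prop1} enters: since $\lambda_1,\dots,\lambda_m$ are distinct, the generalized Vandermonde determinant $\det(\mathfrak{M})$ is nonzero, so the linear system can be inverted. Inverting it lets me isolate, as an element of $W$, the sum over monomials with a fixed maximal power $p_k=P_k$ in the $k$-th slot, with the $s_k^{P_k}$ lowered to $s_k^{0}$ and a compensating factor of $t_k$ introduced by the $h_i$-action — which is precisely the content of \eqref{111}.

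Once \eqref{111} is available, I would obtain \eqref{222} and \eqref{3335} by iterating and combining. Applying the same determinant-inversion argument with the roles of $h_i$ (which multiplies by $t_k$) and $d_i$ (which multiplies by $s_k+i\gamma_k$, hence after extracting the leading $i$-behavior multiplies essentially by $s_k$) lets me reconstruct multiplication by $t_k$ and by $s_k$ on the whole element rather than only on its top-degree part; the point is that repeated extraction of leading terms via \eqref{111}, running downward through the finitely many values that $p_k$ takes on $E$, assembles the full sums in \eqref{222} and \eqref{3335}. I expect the main obstacle to be bookkeeping: organizing the iterated actions so that the coefficient matrix matches the precise shape $y_{pq}=f_q(p-1)$ required by Lemma~\ref{prop1}, and verifying that the polynomial-in-$i$ factors coming from the $d_i$-action (the $s_k+i\gamma_k$ terms) and from powers $n^{s_j-1}\lambda_j^n$ are consistent with the asserted nonvanishing determinant. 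Handling the $d_i$ versus $h_i$ distinction cleanly — separating the $s_k$-shift from the $t_k$-shift so that one can solve for multiplication by $s_k$ and by $t_k$ independently — is the technical heart of the argument.
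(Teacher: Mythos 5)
Your core mechanism is the paper's: for $i$ large enough that $h_i$ and $d_i$ annihilate every $v_{(\mathbf{p},\mathbf{q})}$ appearing in $g$, a single application of $h_i$ (resp.\ $d_i$) to $g$ expands as $\sum_{k}\sum_{x}(-1)^x i^x\lambda_k^i\, a_{x,k}$ (resp.\ $\sum_k\sum_y(-1)^y i^y\lambda_k^i\, b_{y,k}$), where the vectors $a_{x,k}$, $b_{y,k}$ do not depend on $i$; since the $\lambda_k$ are distinct, Lemma \ref{prop1} makes the matrix of values of the functions $i^x\lambda_k^i$ at finitely many large consecutive $i$ invertible, so every $a_{x,k}$ and $b_{y,k}$ lies in $W$. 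Taking $x=P_k$ (where $\binom{p_k}{P_k}=0$ unless $p_k=P_k$) gives \eqref{111}, exactly as you describe.

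Where your plan goes astray is \eqref{222} and \eqref{3335}. No iteration or downward induction through the values of $p_k$ is needed, and the descent you describe would not work as stated: an extraction of type \eqref{111} collapses $s_k^{p_k}$ to $s_k^{0}$, so the pieces it produces have lost all $s_k$-power information, and reassembling $t_kg$ or $s_kg$ from them would require multiplying back by powers of $s_k$ --- i.e.\ it would presuppose \eqref{3335}, which is circular. The fact you half-state but do not nail down is that $t_kg$ and $s_kg$ are themselves among the already-extracted coefficients: since $\binom{p_k}{0}=1$ for every $p_k$, the coefficient of $i^0\lambda_k^i$ in $h_i(g)$ is $a_{0,k}=t_kg$, and the coefficient of $i^0\lambda_k^i$ in $d_i(g)$ is $b_{0,k}=s_kg$ (the $\gamma_k$-term there carries $\binom{p_k}{-1}=0$). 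Note also that your parenthetical about $d_i$ is backwards: it is the \emph{constant-in-$i$} coefficient of the $d_i$-action that realizes multiplication by $s_k$, while the leading coefficient (of $i^{p_k+1}\lambda_k^i$) is $\pm\gamma_k$ times a monomial with $s_k$ collapsed, and vanishes altogether when $\gamma_k=0$. Finally, the iterated products $h_{i_1}\cdots h_{i_N}$ you invoke are unnecessary and would not fit the single-variable format of Lemma \ref{prop1}; single applications of $h_i$ and $d_i$ with $i$ ranging over large integers are exactly what the lemma handles, and they already yield all three memberships at once.
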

\begin{proof}For sufficiently large integer $i$, we have
\begin{eqnarray}\label{zsa11}
h_i(g(\mathbf{s}, \mathbf{t}))&=&\sum_{(\mathbf{p}, \mathbf{q})\in E}\sum_{k=1}^ms_1^{p_1}t_1^{q_1}\otimes\cdots \otimes h_i(s_k^{p_{k}}t_k^{q_k})\otimes\cdots\otimes s_m^{p_m}t_m^{q_m}\otimes v_{(\mathbf{p}, \mathbf{q})}\nonumber\\
&=& \sum_{(\mathbf{p}, \mathbf{q})\in E}\sum_{k=1}^ms_1^{p_1}t_1^{q_1}\otimes\cdots \otimes \lambda_k^i(s_k-i)^{p_{k}}t_k^{q_k+1}\otimes\cdots\otimes s_m^{p_m}t_m^{q_m}\otimes v_{(\mathbf{p}, \mathbf{q})}\nonumber\\
&=& \sum_{(\mathbf{p}, \mathbf{q})\in E}\sum_{k=1}^m\sum_{x=0}^{p_k}(-1)^x i^x\lambda_k^i s_1^{p_1}t_1^{q_1}\otimes\cdots \otimes \binom{p_k}{x}s_k^{p_{k}-x}t_k^{q_k+1}\otimes\cdots\otimes s_m^{p_m}t_m^{q_m}\otimes v_{(\mathbf{p}, \mathbf{q})}
\end{eqnarray}
and
\begin{eqnarray}\label{zsa2}
d_i(g(\mathbf{s}, \mathbf{t}))&=&\sum_{(\mathbf{p}, \mathbf{q})\in E}\sum_{k=1}^ms_1^{p_1}t_1^{q_1}\otimes\cdots \otimes d_i(s_k^{p_{k}}t_k^{q_k})\otimes\cdots\otimes s_m^{p_m}t_m^{q_m}\otimes v_{(\mathbf{p}, \mathbf{q})}\nonumber\\
&=& \sum_{(\mathbf{p}, \mathbf{q})\in E}\sum_{k=1}^ms_1^{p_1}t_1^{q_1}\otimes\cdots \otimes \lambda_k^i(s_k+i\gamma_k)(s_k-i)^{p_{k}}t_k^{q_k}\otimes\cdots\otimes s_m^{p_m}t_m^{q_m}\otimes v_{(\mathbf{p}, \mathbf{q})}\nonumber\\
&=& \sum_{(\mathbf{p}, \mathbf{q})\in E}\sum_{k=1}^m\sum_{y=0}^{p_k+1}(-1)^y i^y\lambda_k^i s_1^{p_1}t_1^{q_1}\otimes\cdots \otimes (\binom{p_k}{y}-\gamma_k\binom{p_k}{y-1})s_k^{p_{k}-y+1}t_k^{q_k}\nonumber\\
&&     \otimes\cdots\otimes s_m^{p_m}t_m^{q_m}\otimes v_{(\mathbf{p}, \mathbf{q})}.
\end{eqnarray}
where we make the convention that $\binom{0}{0}=1$ and $\binom{p_k}{y}=0$ whenever $y>p_k$ or $y<0$.  Thanks to Lemma \ref{prop1}, we know that the coefficients of $i^x\lambda_k^i$ and $i^y\lambda_k^i$ in \eqref{zsa11} and \eqref{zsa2} belong to $W$ for any $0\leq x \leq P_k, 0\leq y \leq P_k+1$ and $1\leq k \leq m$, respectively. That is,
\begin{equation}\label{zzzc}a_{x, k}:=\sum_{(\mathbf{p}, \mathbf{q})\in E}s_1^{p_1}t_1^{q_1}\otimes\cdots \otimes \binom{p_k}{x}s_k^{p_{k}-x}t_k^{q_k+1}\otimes\cdots\otimes s_m^{p_m}t_m^{q_m}\otimes v_{(\mathbf{p}, \mathbf{q})}\in W,\end{equation}
\begin{equation*}b_{y, k}:=\sum_{(\mathbf{p}, \mathbf{q})\in E}s_1^{p_1}t_1^{q_1}\otimes\cdots \otimes (\binom{p_k}{y}-\gamma_k\binom{p_k}{y-1})s_k^{p_{k}-y+1}t_k^{q_k}\otimes\cdots\otimes s_m^{p_m}t_m^{q_m}\otimes v_{(\mathbf{p}, \mathbf{q})}\in W.
\end{equation*}
For any $1\leq k \leq m$, taking $x=P_k, 0$ and $y= 0$ in the above two elements, respectively, we get \eqref{111}-\eqref{3335}. This completes the proof.
\end{proof}

The following result implies that any element of the form $1\otimes\cdots \otimes 1\otimes v$ generates the whole tensor product module $\mathbf{T}^{M}$ for any $0\neq v\in V(\eta, \epsilon, \theta)$.
\begin{prop}\label{prop3}
Suppose that $\lambda_1, \ldots, \lambda_m$ are distinct. Then $1\otimes\cdots \otimes 1\otimes v$ generates the module $\mathbf{T}^{M}$ for any $0\neq v\in V(\eta, \epsilon, \theta)$.
\end{prop}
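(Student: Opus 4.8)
The plan is to reduce the problem entirely to a statement about the highest weight factor $V(\eta,\epsilon,\theta)$ and then invoke its irreducibility. Let $W$ denote the $\mathfrak L$-submodule of $\mathbf T^M$ generated by $\mathbf 1\otimes v:=1\otimes\cdots\otimes 1\otimes v$. Since $W$ is a submodule it is stable under every $h_i$ and $d_i$, so Proposition \ref{prop2} applies to $W$ (this is exactly where the hypothesis that $\lambda_1,\ldots,\lambda_m$ are distinct is used, through Lemma \ref{prop1}). Applying \eqref{222} and \eqref{3335} to an arbitrary element of $W$ shows that multiplication by each $s_k$ and each $t_k$ preserves $W$; in particular, starting from $\mathbf 1\otimes v\in W$ and iterating, I obtain $\mathbf s^{\mathbf p}\mathbf t^{\mathbf q}\otimes v\in W$ for every $(\mathbf p,\mathbf q)\in\Z_+^{2m}$. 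This closure of $W$ under multiplication by the $s_k,t_k$ is the fact I will lean on repeatedly.

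Next I would introduce the subspace $V'=\{w\in V(\eta,\epsilon,\theta)\mid \mathbf 1\otimes w\in W\}$ of $V(\eta,\epsilon,\theta)$, which contains the nonzero vector $v$ and is therefore nonzero. The heart of the argument is to verify that $V'$ is an $\mathfrak L$-submodule. Fix $w\in V'$ and $x\in\{e_i,f_i,h_i,d_i,C\mid i\in\Z\}$. Because $W$ is closed under multiplication by the $s_k,t_k$, the relation $\mathbf 1\otimes w\in W$ upgrades to $\mathbf s^{\mathbf p}\mathbf t^{\mathbf q}\otimes w\in W$ for all $(\mathbf p,\mathbf q)$. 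Applying $x$ to $\mathbf 1\otimes w\in W$ and expanding by the tensor (coproduct) rule gives
\[
x(\mathbf 1\otimes w)=\sum_{k=1}^m 1\otimes\cdots\otimes (x\cdot 1)_k\otimes\cdots\otimes 1\otimes w\ +\ \mathbf 1\otimes xw ,
\]
where $(x\cdot 1)_k$ is the image of the constant polynomial $1$ under the action of $x$ on the $k$-th factor $\C[s_k,t_k]$. By Definition \ref{defi2.2}, every $(x\cdot 1)_k$ is again a polynomial in $s_k,t_k$, so each summand in the first group is a $\C$-linear combination of elements $\mathbf s^{\mathbf p}\mathbf t^{\mathbf q}\otimes w$, all of which lie in $W$. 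Since the left-hand side also lies in $W$, subtracting yields $\mathbf 1\otimes xw\in W$, that is $xw\in V'$. Hence $V'$ is a nonzero $\mathfrak L$-submodule of $V(\eta,\epsilon,\theta)$.

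Finally, the irreducibility of $V(\eta,\epsilon,\theta)$ forces $V'=V(\eta,\epsilon,\theta)$, so $\mathbf 1\otimes w\in W$ for every $w\in V(\eta,\epsilon,\theta)$. Multiplying once more by the $s_k,t_k$ gives $\mathbf s^{\mathbf p}\mathbf t^{\mathbf q}\otimes w\in W$ for all $(\mathbf p,\mathbf q)$ and all $w$, and since these elements span $\mathbf T^M$ I conclude $W=\mathbf T^M$, which is the claim.

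I do not expect a serious obstacle here: the one delicate point is that the auxiliary space $V'$ be legitimate, and this rests entirely on the closure of $W$ under multiplication by $s_k$ and $t_k$ supplied by Proposition \ref{prop2}; the rest is routine coproduct bookkeeping followed by irreducibility of the highest weight factor. The only care needed is to keep the argument uniform across the three cases $M=\Omega,\Delta,\Theta$, which it is, since all that is used about the $M$-action is that it sends the constant $1$ to some polynomial in $s_k,t_k$.
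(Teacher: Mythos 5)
Your proof is correct and follows essentially the same route as the paper's: both use Proposition \ref{prop2} (equations \eqref{222} and \eqref{3335}) to show the submodule $W$ generated by $1\otimes\cdots\otimes 1\otimes v$ is closed under multiplication by the $s_k$ and $t_k$, then pass to the auxiliary subspace $V'\subseteq V(\eta,\epsilon,\theta)$ and invoke irreducibility of the highest weight factor. The only difference is cosmetic: you spell out the coproduct computation showing $V'$ is an $\mathfrak L$-submodule (a step the paper leaves as a remark), and your definition of $V'$ via $\mathbf 1\otimes w\in W$ coincides with the paper's $\{u\mid \C[\mathbf s,\mathbf t]\otimes u\subseteq W\}$ once closure under multiplication is in hand.
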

\begin{proof}
Fix any nonzero $v\in V(\eta, \epsilon, \theta)$. Let $W$ be the submodule of $\mathbf{T}^{M}$ generated by  $1\otimes\cdots \otimes 1\otimes v$. On one hand, it follows from  \eqref{222} that $\mathbf{t}^{\mathbf{q}}\otimes v\in W$ for all $\mathbf{q}\in\Z_+^m$ by induction on $\mathbf{q}$.
On the other hand, we further deduce from  \eqref{3335} that $\mathbf{s}^{\mathbf{p}}\mathbf{t}^{\mathbf{q}}\otimes v\in W$ for all $(\mathbf{p}, \mathbf{q})\in\Z_+^{2m}$
by induction on $\mathbf{p}$. That is $\C[\mathbf{s}, \mathbf{t}]\otimes v\subseteq W$. Let $V^{\prime}:=\{u\in V(\eta, \epsilon, \theta)\mid \C[\mathbf{s}, \mathbf{t}]\otimes u\subseteq W\}$. The previous argument implies that $V^{\prime}\neq\{0\}$. Note that $V^{\prime}$ is an $\mathfrak L$-submodule of  $V(\eta, \epsilon, \theta)$. It follows that $W=\mathbf{T}^{M}$ by the irreducibility of $V(\eta, \epsilon, \theta)$, as desired.
\end{proof}

We are now in a position to determine a sufficient condition for the tensor product module $\mathbf{T}^{M}$ to be irreducible.

\begin{theo}\label{theoo1}
Let $m\in\N$, $\lambda_k, \alpha_k\in\C^*, \beta_k, \gamma_k, \eta, \epsilon, \theta\in\C$ for $k=1, 2, \ldots, m$ with the $\lambda_k$ pairwise distinct. Then
the tensor product module $\mathbf{T}^{M}$ is irreducible provided  that $2\b_k \notin \Z_+$ for any $1\leq k\leq  m$ when $M=\Theta$.
\end{theo}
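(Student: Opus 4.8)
The plan is to show that every nonzero submodule $W\subseteq\mathbf{T}^{M}$ equals $\mathbf{T}^{M}$. By Proposition \ref{prop3} it suffices to produce inside $W$ a single nonzero ``pure'' vector of the form $1\otimes\cdots\otimes 1\otimes v$ with $0\neq v\in V(\eta,\epsilon,\theta)$. To this end I would fix a nonzero $g(\mathbf{s},\mathbf{t})=\sum_{(\mathbf{p},\mathbf{q})\in E}\mathbf{s}^{\mathbf{p}}\mathbf{t}^{\mathbf{q}}\otimes v_{(\mathbf{p},\mathbf{q})}\in W$ of \emph{minimal} degree with respect to the lexicographic order $\succ$ on $\Z_+^{2m}$ (which is a well-order, so a minimal element exists) and argue that $\deg g=\mathbf{0}$, forcing $g$ to be such a pure vector. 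Since $W$ is a submodule it is stable under $h_i,d_i$ for all $i$, so Proposition \ref{prop2} and the displays \eqref{111}--\eqref{3335} are available throughout.

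First I would eliminate the $s$-variables. Writing $\deg g=(\mathbf{p}^{*},\mathbf{q}^{*})$, suppose some $p_k^{*}>0$ and let $k_0$ be the smallest such index. By lex-maximality every term of $g$ has $p_1=\cdots=p_{k_0-1}=0$, whence $P_{k_0}=p_{k_0}^{*}>0$. Applying \eqref{111} with $k=k_0$ yields a nonzero element of $W$ whose leading term is obtained from that of $g$ by replacing $s_{k_0}^{P_{k_0}}$ with $t_{k_0}$; its degree agrees with $\deg g$ in the first $k_0-1$ slots but has $0$ in the $k_0$-th slot, hence is strictly smaller in $\succ$. This contradicts minimality, so $\mathbf{p}^{*}=\mathbf{0}$ and, again by lex-maximality, $g=\sum_{\mathbf{q}}\mathbf{t}^{\mathbf{q}}\otimes v_{\mathbf{q}}$ involves no $s$-variables at all.

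Next I would lower the $t$-degree. Fix a large integer $i$; then $e_i$ and $f_i$ annihilate each of the finitely many $v_{\mathbf{q}}$ (as $V(\eta,\epsilon,\theta)$ is a highest weight module), so acting by $e_i$ or $f_i$ on $g$ only alters the polynomial factors, and every summand carries the scalar $\lambda_k^{i}$ in the $k$-th slot. Choosing $m$ distinct large values of $i$ and inverting the Vandermonde system supplied by Lemma \ref{prop1} (the $\lambda_k$ being distinct), I can isolate, for each $k$, the element of $W$ obtained from $g$ by modifying only its $k$-th tensor slot. For $M=\Omega$ this modified slot is $\alpha_k(t_k-2)^{q_k}$, so $g$ minus the resulting vector replaces $t_k^{q_k}$ by $t_k^{q_k}-(t_k-2)^{q_k}$, dropping the $t_k$-degree by one; for $M=\Delta$ the same works with $t_k+2$ in place of $t_k-2$ using $f_i$. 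Taking $k$ to be the least index with $q_k^{*}>0$ produces a nonzero element of strictly smaller degree, contradicting minimality; iterating forces $\mathbf{q}^{*}=\mathbf{0}$, hence $g=1\otimes\cdots\otimes 1\otimes v$, and Proposition \ref{prop3} finishes the proof in these two cases.

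The main obstacle is the case $M=\Theta$, where both $e_i$ and $f_i$ \emph{raise} the $t$-degree: the modified $k$-th slots are $\alpha_k(\tfrac{t_k}{2}+\beta_k)(t_k-2)^{q_k}$ and $-\alpha_k^{-1}(\tfrac{t_k}{2}-\beta_k)(t_k+2)^{q_k}$, each of degree $q_k+1$. The idea is to form the single vector obtained by adding these two $W$-members (after clearing the factors $\alpha_k$) and subtracting $t_k\cdot g$, which lies in $W$ by \eqref{222}. A short expansion shows that the degree-$(q_k+1)$ and degree-$q_k$ contributions cancel, leaving a $k$-th slot of degree $q_k-1$ with leading coefficient $2q_k\bigl(q_k-1-2\beta_k\bigr)$, while the summands with $q_k=0$ vanish identically. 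This coefficient is nonzero precisely because $q_k-1\in\Z_+$ whereas $2\beta_k\notin\Z_+$, so the hypothesis on $\beta_k$ enters exactly here. With this degree-lowering operation in hand, the same minimality argument forces $\mathbf{q}^{*}=\mathbf{0}$, and Proposition \ref{prop3} again yields $W=\mathbf{T}^{M}$. The only points demanding care are the routine bookkeeping that each operation strictly decreases the lexicographic degree and that the relevant leading coefficient does not vanish.
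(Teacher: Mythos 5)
Your proposal is correct and follows essentially the same route as the paper's own proof: take a minimal-degree element of a nonzero submodule, use \eqref{111} to eliminate the $s$-variables, extract the coefficients of $\lambda_k^i$ from the $e_i$, $f_i$ (and $h_i$) actions via Lemma \ref{prop1}, lower the $t$-degree slot by slot (for $M=\Theta$ via the combination with leading coefficient $2Q(Q-1-2\beta_k)$, which is where $2\beta_k\notin\Z_+$ enters), and conclude with Proposition \ref{prop3}. The only cosmetic difference is that you invoke \eqref{222} for $t_k g$ where the paper extracts the same element as the $\lambda_{j_0}^i$-coefficient of $h_i(g)$; these coincide.
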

\begin{proof}
Let $W$ be a nonzero submodule of $\mathbf{T}^{M}$ and take a nonzero element $g(\mathbf{s}, \mathbf{t})\in W$ with minimal degree. We claim ${\rm deg\,}(g(\mathbf{s}, \mathbf{t}))=\mathbf{0}$ and hence $g(\mathbf{s}, \mathbf{t})=1\otimes\cdots \otimes 1\otimes v$ for some nonzero $v\in V(\eta, \epsilon, \theta)$. Therefore, by Proposition \ref{prop3} we have $W=\mathbf{T}^{M}$ and  $\mathbf{T}^{M}$ is irreducible.

Assume conversely that ${\rm deg\,}(g(\mathbf{s}, \mathbf{t}))\succ \mathbf{0}$. Write $g(\mathbf{s}, \mathbf{t})$ in the form of \eqref{a123}, i.e., $g(\mathbf{s}, \mathbf{t})=\sum_{(\mathbf{p}, \mathbf{q})\in E}\mathbf{s}^{\mathbf{p}}\mathbf{t}^{\mathbf{q}}\otimes v_{(\mathbf{p}, \mathbf{q})}$. We claim that $\mathbf{p}=\mathbf{0}$ for any $(\mathbf{p}, \mathbf{q})\in E$. If this is not true, let $k_0:=\min\{k\mid p_k>0, (\mathbf{p}, \mathbf{q})\in E\ \text{for\ some}\ \mathbf{q}\}$. It follows from \eqref{111} that
$$0\neq\sum_{(\mathbf{p}, \mathbf{q})\in E,\, p_{k_0}=P_{k_0}}\mathbf{s}^{\mathbf{p}-P_{k_0}\omega_{k_0}}\mathbf{t}^{\mathbf{q}+\omega_{k_0}}\otimes v_{(\mathbf{p}, \mathbf{q})}\in W,$$
which has lower degree than $g(\mathbf{s}, \mathbf{t})$. This contradicts the choice of $g(\mathbf{s}, \mathbf{t})$. Thus, the claim follows, i.e., $\mathbf{p}=\mathbf{0}$ for any $(\mathbf{p}, \mathbf{q})\in E$. Then there exists a minimal $j_0$ with $1\leq j_0\leq m$ such that $q_{j_0}>0$. Set $Q_{j_0}=\max\{q_{j_0}\mid q_{j_0}\neq0, (\mathbf{0}, \mathbf{q})\in E\}$. We divide the following discussion into three cases.
\begin{case}
$M=\Omega$.
\end{case}

In this case, a straightforward computation yields that
\begin{eqnarray}\label{zsa1}
e_i(g(\mathbf{s}, \mathbf{t}))&=&\sum_{(\mathbf{0}, \mathbf{q})\in E}\sum_{k=1}^mt_1^{q_1}\otimes\cdots \otimes e_i(t_k^{q_k})\otimes\cdots\otimes t_m^{q_m}\otimes v_{(\mathbf{0}, \mathbf{q})}\nonumber\\
&=& \sum_{(\mathbf{0}, \mathbf{q})\in E}\sum_{k=1}^m\lambda_k^i \alpha_k t_1^{q_1}\otimes\cdots \otimes (t_k-2)^{q_k}\otimes\cdots\otimes t_m^{q_m}\otimes v_{(\mathbf{0}, \mathbf{q})},
\end{eqnarray}
where $i$ is sufficiently large. Applying Lemma \ref{prop1} to the above element, we see that
the coefficient of  $\lambda_{j_0}^i$ lies in $W$, which together with the fact that $\a_{j_0}\neq0$ implies
$$(t_{j_0}-2)^{q_{j_0}}\mathbf{t}^{\mathbf{q}-q_{j_0}\omega_{j_0}}\otimes v_{(\mathbf{0}, \mathbf{q})}\in W.$$
Subtracting the above element from $g(\mathbf{s}, \mathbf{t})$, we have
\begin{eqnarray*}
&&\sum_{(\mathbf{0}, \mathbf{q})\in E}(t_{j_0}^{q_{j_0}}-(t_{j_0}-2)^{q_{j_0}})\mathbf{t}^{\mathbf{q}-q_{j_0}\omega_{j_0}}\otimes v_{(\mathbf{0}, \mathbf{q})}\nonumber\\
&=&\sum_{(\mathbf{0}, \mathbf{q})\in E,
q_{j_0}=Q_{j_0}}(t_{j_0}^{Q_{j_0}}-(t_{j_0}-2)^{Q_{j_0}})\mathbf{t}^{\mathbf{q}-Q_{j_0}\omega_{j_0}}\otimes v_{(\mathbf{0}, \mathbf{q})}+\mbox{lower terms w. r. t. $t_{j_0}$},
\end{eqnarray*}
which is a nonzero element in $W$ and  has lower degree than $g(\mathbf{s}, \mathbf{t})$. This is a contradiction with the choice of $g(\mathbf{s}, \mathbf{t})$. So ${\rm deg\,}(g(\mathbf{s}, \mathbf{t}))=\mathbf{0}$.

\begin{case}$M=\Delta$.\end{case}

Using similar arguments as for  Case 1 but replacing  $e_i$ with $f_i$, we see that ${\rm deg\,}(g(\mathbf{s}, \mathbf{t}))=\mathbf{0}$.

\begin{case}\label{333}
 $M=\Theta$.\end{case}

In this case, for sufficiently large $i$, we have
\begin{eqnarray*}
e_i(g(\mathbf{s}, \mathbf{t}))&=&\sum_{(\mathbf{0}, \mathbf{q})\in E}\sum_{k=1}^mt_1^{q_1}\otimes\cdots \otimes e_i(t_k^{q_k})\otimes\cdots\otimes t_m^{q_m}\otimes v_{(\mathbf{0}, \mathbf{q})}\nonumber\\
&=& \sum_{(\mathbf{0}, \mathbf{q})\in E}\sum_{k=1}^m\lambda_k^i \alpha_k t_1^{q_1}\otimes\cdots \otimes (\frac{t_k}{2}+\b_k)(t_k-2)^{q_k}\otimes\cdots\otimes t_m^{q_m}\otimes v_{(\mathbf{0}, \mathbf{q})},\\
f_i(g(\mathbf{s}, \mathbf{t}))&=&\sum_{(\mathbf{0}, \mathbf{q})\in E}\sum_{k=1}^mt_1^{q_1}\otimes\cdots \otimes f_i(t_k^{q_k})\otimes\cdots\otimes t_m^{q_m}\otimes v_{(\mathbf{0}, \mathbf{q})}\nonumber\\
&=& -\sum_{(\mathbf{0}, \mathbf{q})\in E}\sum_{k=1}^m\frac{\lambda_k^i} {\alpha_k} t_1^{q_1}\otimes\cdots \otimes (\frac{t_k}{2}-\b_k)(t_k+2)^{q_k}\otimes\cdots\otimes t_m^{q_m}\otimes v_{(\mathbf{0}, \mathbf{q})}
\end{eqnarray*}
and
\begin{eqnarray*}
h_i(g(\mathbf{s}, \mathbf{t}))&=&\sum_{(\mathbf{0}, \mathbf{q})\in E}\sum_{k=1}^mt_1^{q_1}\otimes\cdots \otimes h_i(t_k^{q_k})\otimes\cdots\otimes t_m^{q_m}\otimes v_{(\mathbf{0}, \mathbf{q})}\nonumber\\
&=& \sum_{(\mathbf{0}, \mathbf{q})\in E}\sum_{k=1}^m\lambda_k^i  t_1^{q_1}\otimes\cdots \otimes t_k^{q_k+1}\otimes\cdots\otimes t_m^{q_m}\otimes v_{(\mathbf{0}, \mathbf{q})}.
\end{eqnarray*}
Applying  Lemma \ref{prop1} to the above three elements, respectively,  we know that the coefficients of $\lambda_{j_0}^i$ belong to $W$, i.e.,
\begin{eqnarray*}0\neq \alpha_{j_0}g_{1, {j_0}}:=\sum_{(\mathbf{0}, \mathbf{q})\in E}\alpha_{j_0}(\frac{t_{j_0}}{2}+\b_{j_0})(t_{j_0}-2)^{q_{j_0}}\mathbf{t}^{\mathbf{q}-q_{j_0}\omega_{j_0}}\otimes v_{(\mathbf{0}, \mathbf{q})}\in W,\\
0\neq -\frac{1}{\alpha_{j_0}}g_{2, {j_0}}:=-\sum_{(\mathbf{0}, \mathbf{q})\in E}\frac{1}{\alpha_{j_0}}(\frac{t_{j_0}}{2}-\b_{j_0})(t_{j_0}+2)^{q_{j_0}}\mathbf{t}^{\mathbf{q}-q_{j_0}\omega_{j_0}}\otimes v_{(\mathbf{0}, \mathbf{q})}\in W,\\
0\neq g_{3, {j_0}}:=\sum_{(\mathbf{0}, \mathbf{q})\in E}t_{j_0}^{q_{j_0}+1}\mathbf{t}^{\mathbf{q}-q_{j_0}\omega_{j_0}}\otimes v_{(\mathbf{0}, \mathbf{q})}\in W.
\end{eqnarray*}
Considering a linear combination of  $g_{1, j_{0}}, g_{2, j_{0}}, g_{3, j_{0}}$, we have
\begin{eqnarray*}
&&g_{1, j_{0}}+g_{2, j_{0}}-g_{3, j_{0}}\nonumber\\
&=&\sum_{(\mathbf{0}, \mathbf{q})\in E}\big((\frac{t_{j_0}}{2}+\b_{j_0})(t_{j_0}-2)^{q_{j_0}}+(\frac{t_{j_0}}{2}-\b_{j_0})(t_{j_0}+2)^{q_{j_0}}-t_{j_0}^{q_{j_0}+1}\big)\mathbf{t}^{\mathbf{q}-q_{j_0}\omega_{j_0}}\otimes v_{(\mathbf{0}, \mathbf{q})}\\
&=&\sum_{(\mathbf{0}, \mathbf{q})\in E,
q_{j_0}=Q_{j_0}}2Q_{j_0}(Q_{j_0}-1-2\b_{j_0})t_{j_0}^{Q_{j_0}-1}\mathbf{t}^{\mathbf{q}-Q_{j_0}\omega_{j_0}}\otimes v_{(\mathbf{0}, \mathbf{q})}\nonumber\\
&&+\mbox{lower terms w. r. t. $t_{j_0}$},
\end{eqnarray*}
which is a nonzero element in $W$ and has lower degree than $g(\mathbf{s}, \mathbf{t})$. Hence, ${\rm deg\,}(g(\mathbf{s}, \mathbf{t}))=\mathbf{0}$.  We complete the proof.
\end{proof}

Now we consider the case when $\lambda_1, \ldots, \lambda_m$ are not distinct. Actually,  $\mathbf{T}^{M}$ is reducible in this case and it suffices to show the reducibility of $M(\lambda, \alpha_1, \beta_1, \gamma_1)\otimes M(\lambda, \alpha_2, \beta_2, \gamma_2)$. We still use the notations as before, only taking $m=2$ and $V(\eta, \epsilon, \theta)$ as the $1$-dimensional trivial module. For convenience, we identify $M(\lambda, \alpha_1, \beta_1, \gamma_1)\otimes M(\lambda, \alpha_2, \beta_2, \gamma_2)=\C[s_1, s_2, t_1, t_2]$. For any $l\in\Z_+$, denote
$$W_l={\rm span\,}\{s_1^r(s_1+s_2)^p\C[t_1, t_2]\mid r, p\in\Z_+, r\leq l\}.$$
Clearly, $W_l\subset W_{l+1}$. And, we have the following
\begin{theo}\label{theo2}
Keep notations as above, then each $W_l$ is a proper submodule of $M(\lambda, \alpha_1, \beta_1, \gamma_1)\otimes M(\lambda, \alpha_2, \beta_2, \gamma_2)$.
\end{theo}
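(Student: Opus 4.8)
The plan is to recognize $W_l$ as the $l$-th piece of a degree filtration after a linear change of coordinates, and then to verify directly that every generator of $\mathfrak{L}$ respects this filtration. Concretely, I would set $u=s_1$ and $w=s_1+s_2$, so that $\C[s_1,s_2,t_1,t_2]=\C[u,w,t_1,t_2]$ and
$$W_l=\{g\in\C[u,w,t_1,t_2]\mid \deg_u g\leq l\}.$$
In these coordinates the inclusion $W_l\subset W_{l+1}$ is transparent, and properness is immediate: $s_1^{l+1}=u^{l+1}$ has $u$-degree $l+1$ and hence does not lie in $W_l$, while $W_l$ contains the nonzero constant $1$. Thus the whole statement reduces to showing that $W_l$ is stable under the action of $e_i,f_i,h_i,d_i,C$ for all $i$.

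Next I would rewrite each generator in the coordinates $u,w,t_1,t_2$, using that the tensor action is the sum of the two one-factor actions and that shifting $s_1\mapsto s_1-i$ becomes $(u,w)\mapsto(u-i,w-i)$, while shifting $s_2\mapsto s_2-i$ becomes $(u,w)\mapsto(u,w-i)$. The point I would stress is that, in all three cases $M=\Omega,\Delta,\Theta$, the operators $e_i,f_i,h_i$ act by multiplying by a coefficient depending only on $t_1,t_2$ and then applying one of the two $u$-shifts above; since neither $u\mapsto u-i$ nor $u\mapsto u$ changes the degree in $u$, and the coefficients are free of $u$, these three generators preserve $\deg_u$ and hence send $W_l$ into itself. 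Since $C$ acts as $0$, this isolates $d_i$ as the only generator that could possibly raise the $u$-degree.

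The crux of the argument, and the step I expect to be the main obstacle, is the cancellation occurring in $d_i$. Writing $g=\sum_{r\leq l}u^rG_r$ with $G_r\in\C[w,t_1,t_2]$ and abbreviating $G_r(w-i):=G_r(w-i,t_1,t_2)$, one has, using $s_1=u$ and $s_2=w-u$,
$$d_i(g)=\sum_{r\leq l}\Big(\lambda^i(u+i\gamma_1)(u-i)^rG_r(w-i)+\lambda^i(w-u+i\gamma_2)u^rG_r(w-i)\Big).$$
Each of the two inner terms has $u$-degree $r+1$, but their top-degree coefficients are $+\lambda^iu^{r+1}G_r(w-i)$ and $-\lambda^iu^{r+1}G_r(w-i)$, which cancel precisely because the two tensor factors share the same $\lambda$; what survives has $u$-degree at most $r\leq l$. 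Hence $d_i(W_l)\subseteq W_l$ as well. Combining this with the previous paragraph shows that $W_l$ is a submodule, and together with the properness remark this completes the proof. I would emphasize that the equality $\lambda_1=\lambda_2$ is exactly what forces the leading terms in $d_i$ to cancel; this is the structural reason the tensor product becomes reducible once the $\lambda_k$ coincide, in contrast to the irreducibility established in Theorem \ref{theoo1} for distinct $\lambda_k$.
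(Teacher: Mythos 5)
Your proof is correct and follows essentially the same route as the paper: a direct, generator-by-generator check that $e_i,f_i,h_i,d_i,C$ preserve $W_l$, with the crux being the cancellation of the top $s_1$-degree terms in the $d_i$-action, which is exactly where $\lambda_1=\lambda_2$ enters. Your change of variables $u=s_1$, $w=s_1+s_2$ (viewing $W_l$ as the piece $\deg_u\leq l$ of a filtration) is a cleaner bookkeeping of the computation the paper carries out explicitly for $M=\Omega$ and leaves to the reader for $\Delta,\Theta$, with the added merits of treating all three cases uniformly and of recording properness explicitly.
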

\begin{proof}
We only show the assertion for the case  $M=\Omega$,  the other two cases can be treated similarly. Fix any $l\in\Z_+$. For any $s_1^r(s_1+s_2)^pg_1(t_1)g_2(t_2)\in W_l$, where $r, p\in\Z_+, r\leq l, g_1(t_1)\in\C[t_1], g_2(t_2)\in\C[t_2]$, we can compute
\begin{eqnarray*}
&&\lambda^{-i}d_i\big(s_1^r(s_1+s_2)^pg_1(t_1)g_2(t_2)\big)\\
&=&\lambda^{-i}d_i\big(\sum_{j=0}^p\binom{p}{j}s_1^{r+j}s_2^{p-j}g_1(t_1)g_2(t_2)\big)\\
&=& \lambda^{-i}\sum_{j=0}^p\binom{p}{j}\big(d_i(s_1^{r+j}g_1(t_1))s_2^{p-j}g_2(t_2)
+s_1^{r+j}g_1(t_1)d_i(s_2^{p-j}g_2(t_2))\big)\\
&=& (s_1-i)^r(s_1+i\gamma_1)g_1(t_1)g_2(t_2)\sum_{j=0}^p\binom{p}{j}(s_1-i)^js_2^{p-j}\\
&&+ s_1^r(s_2+i\gamma_2)g_1(t_1)g_2(t_2)\sum_{j=0}^p\binom{p}{j}(s_2-i)^{p-j}s_1^j\\
&=&(s_1-i)^r(s_1+i\gamma_1)g_1(t_1)g_2(t_2)(s_1+s_2-i)^p+s_1^r(s_2+i\gamma_2)g_1(t_1)g_2(t_2)(s_1+s_2-i)^p\\
&=&\big(s_1((s_1-i)^r-s_1^r)+i\gamma_1(s_1-i)^r\big)g_1(t_1)g_2(t_2)(s_1+s_2-i)^p\\
&&+s_1^r(s_1+s_2+i\gamma_2)g_1(t_1)g_2(t_2)(s_1+s_2-i)^p,
\end{eqnarray*}
\begin{eqnarray*}
&&\lambda^{-i}h_i\big(s_1^r(s_1+s_2)^pg_1(t_1)g_2(t_2)\big)\\
&=& \lambda^{-i}\sum_{j=0}^p\binom{p}{j}\big(h_i(s_1^{r+j}g_1(t_1))s_2^{p-j}g_2(t_2)
+s_1^{r+j}g_1(t_1)h_i(s_2^{p-j}g_2(t_2))\big)\\
&=&(s_1-i)^rt_1g_1(t_1)g_2(t_2)\sum_{j=0}^p\binom{p}{j}(s_1-i)^js_2^{p-j}\\
&&+ s_1^rg_1(t_1)t_2g_2(t_2)\sum_{j=0}^p\binom{p}{j}(s_2-i)^{p-j}s_1^j\\
&=&(s_1-i)^rt_1g_1(t_1)g_2(t_2)(s_1+s_2-i)^p+s_1^rg_1(t_1)t_2g_2(t_2)(s_1+s_2-i)^p,
\end{eqnarray*}
\begin{eqnarray*}
&&\lambda^{-i}e_i\big(s_1^r(s_1+s_2)^pg_1(t_1)g_2(t_2)\big)\\
&=& \lambda^{-i}\sum_{j=0}^p\binom{p}{j}\big(e_i\cdot(s_1^{r+j}g_1(t_1))s_2^{p-j}g_2(t_2)
+s_1^{r+j}g_1(t_1)e_i(s_2^{p-j}g_2(t_2))\big)\\
&=&\alpha_1(s_1-i)^rg_1(t_1-2)g_2(t_2)\sum_{j=0}^p\binom{p}{j}(s_1-i)^js_2^{p-j}\\
&&+ \alpha_2s_1^rg_1(t_1)g_2(t_2-2)\sum_{j=0}^p\binom{p}{j}(s_2-i)^{p-j}s_1^j\\
&=&\alpha_1(s_1-i)^rg_1(t_1-2)g_2(t_2)(s_1+s_2-i)^p+\alpha_2s_1^rg_1(t_1)g_2(t_2-2)(s_1+s_2-i)^p
\end{eqnarray*}
and
\begin{eqnarray*}
&&\lambda^{-i}f_i\big(s_1^r(s_1+s_2)^pg_1(t_1)g_2(t_2)\big)\\
&=& \lambda^{-i}\sum_{j=0}^p\binom{p}{j}\big(f_i(s_1^{r+j}g_1(t_1))s_2^{p-j}g_2(t_2)
+s_1^{r+j}g_1(t_1)f_i(s_2^{p-j}g_2(t_2))\big)\\
&=&-\frac{1}{\alpha_1}(s_1-i)^r(\frac{t_1}{2}-\beta_1)(\frac{t_1}{2}+\beta_1+1)g_1(t_1+2)g_2(t_2)\sum_{j=0}^p\binom{p}{j}(s_1-i)^js_2^{p-j}\\
&&-\frac{1}{\alpha_2}s_1^rg_1(t_1)(\frac{t_2}{2}-\beta_2)(\frac{t_2}{2}+\beta_2+1)g_2(t_2+2)\sum_{j=0}^p\binom{p}{j}(s_2-i)^{p-j}s_1^j\\
&=&-\frac{1}{\alpha_1}(s_1-i)^r(\frac{t_1}{2}-\beta_1)(\frac{t_1}{2}+\beta_1+1)g_1(t_1+2)g_2(t_2)(s_1+s_2-i)^p\\
&&-\frac{1}{\alpha_2}s_1^rg_1(t_1)(\frac{t_2}{2}-\beta_2)(\frac{t_2}{2}+\beta_2+1)g_2(t_2+2)(s_1+s_2-i)^p,
\end{eqnarray*}
which imply that $$X_i\big(s_1^r(s_1+s_2)^pg_1(t_1)g_2(t_2)\big)\in W_l, \quad{\rm where}\,\,X_i\in\{d_i, h_i, e_i, f_i\mid\forall i\in\Z\}.$$
This together with $CW_l=0$ implies that $\mathfrak LW_l\subset W_l$. So $W_l$ is a submodule of $\Omega(\lambda, \alpha_1, \beta_1, \gamma_1)\otimes \Omega(\lambda, \alpha_2, \beta_2, \gamma_2)$. We complete the proof.
\end{proof}

As a direct consequence of Theorem \ref{theoo1} and Theorem \ref{theo2}, we have the following sufficient and necessary condition for a tensor product module to be irreducible.

\begin{coro}\label{theo3}
The $\mathfrak L$-module  $\mathbf{T}^{M}$ is irreducible if and only if $\lambda_1, \ldots, \lambda_m$ are pairwise distinct.
\end{coro}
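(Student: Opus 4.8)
The plan is to read off the corollary directly from Theorem \ref{theoo1} and Theorem \ref{theo2}, splitting the equivalence into its two implications and handling necessity by the contrapositive. The statement to prove is that $\mathbf{T}^{M}$ is irreducible if and only if $\lambda_1, \ldots, \lambda_m$ are pairwise distinct. For the sufficiency (``if'') direction there is essentially nothing to do: assuming the $\lambda_k$ pairwise distinct, Theorem \ref{theoo1} asserts exactly that $\mathbf{T}^{M}$ is irreducible, where I note that the side condition $2\beta_k \notin \Z_+$ for the case $M = \Theta$ is already part of the standing conventions defining $\mathbf{T}^{M}$.

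For the necessity (``only if'') direction, I would argue the contrapositive: if the $\lambda_k$ are \emph{not} pairwise distinct, then $\mathbf{T}^{M}$ is reducible. Pick indices $i \neq j$ with $\lambda_i = \lambda_j$. Permuting the tensor factors is an $\mathfrak{L}$-module isomorphism, since every $x \in \mathfrak{L}$ acts by the Leibniz rule $x \cdot (a \otimes b) = xa \otimes b + a \otimes xb$ and $C$ acts as the scalar $\theta$ throughout; hence we may assume $\lambda_1 = \lambda_2 =: \lambda$. Set $A := M(\lambda, \alpha_1, \beta_1, \gamma_1) \otimes M(\lambda, \alpha_2, \beta_2, \gamma_2)$ and $B := M(\lambda_3, \alpha_3, \beta_3, \gamma_3) \otimes \cdots \otimes M(\lambda_m, \alpha_m, \beta_m, \gamma_m) \otimes V(\eta, \epsilon, \theta)$, so that $\mathbf{T}^{M} = A \otimes B$ as $\mathfrak{L}$-modules. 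By Theorem \ref{theo2}, $A$ admits, for each fixed $l \in \Z_+$, the proper nonzero submodule $W_l$.

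The key step — and the only point requiring a short verification — is the lifting assertion that $W_l \otimes B$ is a proper nonzero $\mathfrak{L}$-submodule of $\mathbf{T}^{M}$. This follows from the associativity of the tensor-product module structure: writing $\mathbf{T}^{M} = A \otimes B$ and applying $x \cdot (a \otimes b) = xa \otimes b + a \otimes xb$ together with the fact that $C$ acts as the scalar $\theta$ on all of $\mathbf{T}^{M}$, any submodule $W_l \subseteq A$ produces a submodule $W_l \otimes B \subseteq A \otimes B$. Properness and nontriviality are inherited from $0 \neq W_l \subsetneq A$ and $B \neq 0$, whence $\mathbf{T}^{M}$ is reducible and the contrapositive is complete.

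I expect no serious obstacle, as the entire content is packaged in the two quoted theorems; the whole burden of the argument is transferred to Theorem \ref{theoo1} (sufficiency) and Theorem \ref{theo2} (the reducible two-factor case). The only genuine, though routine, verification is the compatibility of $W_l \otimes B$ with the Leibniz action under the regrouping $\mathbf{T}^{M} = A \otimes B$, in particular checking that the central element $C$ plays no role beyond acting as the scalar $\theta$, so that its presence does not obstruct the submodule property.
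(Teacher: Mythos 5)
Your proof is correct and takes essentially the same route as the paper: sufficiency is quoted from Theorem \ref{theoo1}, and necessity is reduced---exactly as the paper's remark preceding Theorem \ref{theo2} intends---to the reducibility of the two-factor product $M(\lambda,\alpha_1,\beta_1,\gamma_1)\otimes M(\lambda,\alpha_2,\beta_2,\gamma_2)$ established there. The only difference is that you make explicit the two steps the paper leaves implicit (the permutation of tensor factors being an $\mathfrak{L}$-module isomorphism, and the lifting of the proper submodule $W_l$ to the proper submodule $W_l\otimes B$ of $\mathbf{T}^{M}=A\otimes B$), and both verifications are routine and carried out correctly.
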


\section{Isomorphism classes of the tensor product modules}
In this section, we will determine the necessary and sufficient conditions for two irreducible tensor product modules  $\mathbf{T}^{M}$ to be isomorphic. By Corollary \ref{theo3}, to ensure that $\mathbf{T}^{M}$ is irreducible, we suppose that $\lambda_k, \alpha_k\in\C^*, \beta_k, \gamma_k \in\C$ for $k=1, 2, \ldots, m$ with the $\lambda_k$ pairwise distinct and  $2\b_k \notin \Z_+$ when $M=\Theta$ throughout this section.

For any $g:=g(\mathbf{s}, \mathbf{t})\in \mathbf{T}^{M}$, we define
\begin{equation*}
R_{g}=\left\{\begin{array}{llll}
\lim_{l\rightarrow \infty}{\rm rank\,}\{g, h_i(g), e_i(g)\mid i\geq l\},&\mbox{if \ }g\in \mathbf{T}^{\Omega},\\[4pt]
\lim_{l\rightarrow \infty}{\rm rank\,}\{g, h_i(g), f_i(g)\mid i\geq l\},&\mbox{if \ }g\in \mathbf{T}^{\Delta},\\[4pt]
\lim_{l\rightarrow \infty}{\rm rank\,}\{g, h_i(g), e_i(g), f_i(g)\mid i\geq l\},&\mbox{if \ }g\in \mathbf{T}^{\Theta},
\end{array}\right.\end{equation*}
and
$$R_{\mathbf{T}^{M}}={\rm inf\,}\{R_{g}\mid 0\neq g\in \mathbf{T}^{M}\},$$
where ${\rm rank\,}(Y)={\rm dim\,}{\rm span\,}(Y)$ for any subset $Y$ in a vector space. If we write  $g$ in the form \eqref{a123}, there exists a minimal positive integer $I(g)$ such that $h_iv_{(\mathbf{p}, \mathbf{q})}=e_iv_{(\mathbf{p}, \mathbf{q})}=f_iv_{(\mathbf{p}, \mathbf{q})}=0$ for all $i\geq I(g)$ and $(\mathbf{p}, \mathbf{q})\in E$.

We have the following result describing the property of the invariants $R_{g}$ and $R_{\mathbf{T}^{M}}$.
\begin{lemm}\label{theoo5}
For any nonzero $g\in \mathbf{T}^{M}$, the following statements hold.
\begin{itemize}
  \item[(1)]For all $l\geq I(g)$,
  \begin{equation*}
R_{g}=\left\{\begin{array}{llll}
{\rm rank\,}\{g, h_i(g), e_i(g)\mid i\geq l\},&\mbox{if \ }g\in \mathbf{T}^{\Omega},\\[4pt]
{\rm rank\,}\{g, h_i(g), f_i(g)\mid i\geq l\},&\mbox{if \ }g\in \mathbf{T}^{\Delta},\\[4pt]
{\rm rank\,}\{g, h_i(g), e_i(g), f_i(g)\mid i\geq l\},&\mbox{if \ }g\in \mathbf{T}^{\Theta}.
\end{array}\right.\end{equation*}
  \item[(2)]$R_{g}\geq m+1$ and the equality holds if and only if  $g=1\otimes\cdots \otimes 1 \otimes v$ for some $0\neq v\in V(\eta, \epsilon, \theta)$.
  \item[(3)]$R_{\mathbf{T}^{M}}=m+1$.
\end{itemize}
\end{lemm}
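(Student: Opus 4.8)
The plan is as follows. Part (3) is immediate once (2) is in hand: by definition $R_{\mathbf{T}^{M}}=\inf\{R_{g}\mid 0\neq g\}$, and (2) gives $R_{g}\geq m+1$ for every $g$ together with an explicit family (any $g=1\otimes\cdots\otimes 1\otimes v$) attaining the value $m+1$, so the infimum equals $m+1$. The whole weight of the argument therefore rests on (1) and (2), and throughout I would exploit that $V(\eta,\epsilon,\theta)$ is a highest weight module, so that for $i\geq I(g)$ the operators $h_i,e_i,f_i$ annihilate every $v_{(\mathbf{p},\mathbf{q})}$ and act on $g$ only through the polynomial factors.

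For part (1), I would observe that for $i\geq I(g)$ the binomial expansion in \eqref{zsa11} shows $h_i(g)$ and $e_i(g)$ to be exponential--polynomial in $i$, namely $h_i(g)=\sum_{k,x}c^h_{k,x}\,i^x\lambda_k^i$ and $e_i(g)=\sum_{k,y}c^e_{k,y}\,i^y\lambda_k^i$ with fixed vectors $c^h_{k,x},c^e_{k,y}\in\mathbf{T}^{M}$ independent of $i$ and valid for \emph{all} $i\geq I(g)$. Since the $\lambda_k$ are distinct, Lemma \ref{prop1} guarantees that for every $l\geq I(g)$ the set $\{h_i(g)\mid i\geq l\}$ spans exactly $\mathrm{span}\{c^h_{k,x}\}$, and likewise for the $e_i$ (and $f_i$) family. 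Hence $\mathrm{span}\{g,h_i(g),e_i(g)\mid i\geq l\}$ is the same for every $l\geq I(g)$, which proves the stabilization and identifies the limit; the $\Delta$ and $\Theta$ cases are identical with the corresponding coefficient families.

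For part (2), the coefficient of $i^0\lambda_k^i$ in $h_i(g)$ is precisely $c^h_{k,0}=t_kg$ (compare \eqref{222}), so each $t_kg$ lies in the span. Writing $g=\sum_j g_j(\mathbf{s},\mathbf{t})\otimes u_j$ with the $u_j$ linearly independent and the $g_j$ not all zero, a relation $a_0g+\sum_k a_kt_kg=0$ yields $(a_0+\sum_k a_kt_k)g_j=0$ for each $j$; since $\C[\mathbf{s},\mathbf{t}]$ is a domain and some $g_j\neq 0$, the polynomial $a_0+\sum_k a_kt_k$ vanishes and all $a_\ell=0$. Thus $\{g,t_1g,\ldots,t_mg\}$ is linearly independent and $R_{g}\geq m+1$. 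For the equality statement, the direction $\Leftarrow$ is a short computation from Definition \ref{defi2.2}: for $g=1\otimes\cdots\otimes 1\otimes v$ one finds $h_i(g)=\sum_k\lambda_k^i(1\otimes\cdots\otimes t_k\otimes\cdots\otimes v)$ while $e_i(g)\in\C g$ in the $\Omega$ case (and for $\Theta$ the extra factors $\tfrac{t_k}{2}\pm\beta_k$ only produce combinations of $g$ and $t_kg$), so the span is exactly $\mathrm{span}\{g,t_1g,\ldots,t_mg\}$, of dimension $m+1$. For the direction $\Rightarrow$ I would argue contrapositively: if some $g_{j_0}$ is non-constant, I exhibit one further independent vector, forcing $R_{g}\geq m+2$.

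To produce this extra vector I distinguish two cases. If some $g_j$ has positive degree in a variable $s_{k_0}$, the coefficient $c^h_{k_0,1}=-t_{k_0}\partial_{s_{k_0}}g$ strictly lowers the $s_{k_0}$-degree, and a comparison of top $s_{k_0}$-degrees (using the linear independence of $\{1,t_1,\ldots,t_m\}$) shows it independent of $\{g,t_1g,\ldots,t_mg\}$; this case is uniform in $\Omega,\Delta,\Theta$ because $h_i$ acts identically in all three. Otherwise every $g_j\in\C[\mathbf{t}]$ and some $g_{j_0}$ has positive $t_{k_0}$-degree $e$; here I would use the $e_i$-coefficient, for $\Theta$ the combination $\big(\tfrac{t_{k_0}}{2}+\beta_{k_0}\big)\sigma_{k_0}(g)+\big(\tfrac{t_{k_0}}{2}-\beta_{k_0}\big)\tau_{k_0}(g)-t_{k_0}g$ (with $\sigma_{k_0},\tau_{k_0}$ shifting $t_{k_0}$ by $-2,+2$), which as in Case \ref{333} of Theorem \ref{theoo1} has leading term $2e(e-1-2\beta_{k_0})t_{k_0}^{e-1}$ on each top component and hence strictly lowers the $t_{k_0}$-degree, yielding another independent vector. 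The hardest point is exactly this last case for $\Theta$: one must verify that the degree-lowering combination is nonzero, which is precisely where the standing hypothesis $2\beta_k\notin\Z_+$ enters, forcing $e-1-2\beta_{k_0}\neq 0$ for the nonnegative integer $e-1$. A secondary subtlety running through the whole converse is that all leading-degree comparisons must be carried out simultaneously over the several components $u_j$, which is handled by passing to the top-degree homogeneous part in the chosen variable.
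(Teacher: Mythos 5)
Your proposal is correct and takes essentially the same route as the paper's own proof: part (1) via extracting the exponential--polynomial coefficients in $i$ and invoking Lemma \ref{prop1}, part (2) via the same case analysis (degree-zero elements realize exactly $m+1$; otherwise one exhibits a degree-lowering coefficient vector --- from the $h_i$-expansion when some $s$-variable occurs, and from the $e_i$/$f_i$-coefficients, with the $\Theta$-combination and the hypothesis $2\beta_k\notin\Z_+$, when only $t$-variables occur --- forcing $R_g\geq m+2$), and part (3) as an immediate consequence of (2). The only deviations are cosmetic: you use the $i^1$-coefficient $-t_{k_0}\partial_{s_{k_0}}g$ and single-variable degree comparisons, plus an explicit integral-domain argument for the independence of $\{g,t_1g,\ldots,t_mg\}$, where the paper uses the top coefficient $a_{P_{k_1},k_1}$ and comparisons in the lexicographic total order.
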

\begin{proof}
(1) we only tackle the case $g\in \mathbf{T}^{\Omega}$, since a similar argument can be applied to the other two cases.
Denote $R_{g, l}= {\rm rank\,}\{g, h_i(g), e_i(g)\mid i\geq l\}$ for any $l\in\N$, then it suffices to show that $R_{g, l}=R_{g, I(g)}$ for all $l\geq I(g)$. For any $i\geq l\geq I(g)$, an explicit calculation yields that
\begin{eqnarray*}
e_i (g)&=&\sum_{(\mathbf{p}, \mathbf{q})\in E}\sum_{k=1}^ms_1^{p_1}t_1^{q_1}\otimes\cdots \otimes e_i(s_k^{p_{k}}t_k^{q_k})\otimes\cdots\otimes s_m^{p_m}t_m^{q_m}\otimes v_{(\mathbf{p}, \mathbf{q})}\nonumber\\
&=& \sum_{(\mathbf{p}, \mathbf{q})\in E}\sum_{k=1}^ms_1^{p_1}t_1^{q_1}\otimes\cdots \otimes \lambda_k^i\alpha_k(s_k-i)^{p_{k}}(t_k-2)^{q_k}\otimes\cdots\otimes s_m^{p_m}t_m^{q_m}\otimes v_{(\mathbf{p}, \mathbf{q})}\nonumber\\
&=& \sum_{(\mathbf{p}, \mathbf{q})\in E}\sum_{k=1}^m\sum_{z=0}^{p_k}(-1)^z i^z\lambda_k^i\alpha_k s_1^{p_1}t_1^{q_1}\otimes\cdots \otimes \binom{p_k}{z}s_k^{p_{k}-z}(t_k-2)^{q_k}\otimes\nonumber\\
&&     \cdots\otimes s_m^{p_m}t_m^{q_m}\otimes v_{(\mathbf{p}, \mathbf{q})}.
\end{eqnarray*}
Considering the coefficient of $i^z\lambda_k^i$ for any $0\leq z \leq P_k$ and $1\leq k \leq m$ in the above element and noting that  $\a_k\neq0$, we get
\begin{equation}\label{ccv}
c_{z, k}:=\sum_{(\mathbf{p}, \mathbf{q})\in E}s_1^{p_1}t_1^{q_1}\otimes\cdots \otimes \binom{p_k}{z}s_k^{p_{k}-z}(t_k-2)^{q_k}\otimes\cdots\otimes s_m^{p_m}t_m^{q_m}\otimes v_{(\mathbf{p}, \mathbf{q})}\in W,
\end{equation}
which is independent of $l$ provided that $l\geq I(g)$. This along with \eqref{zzzc} gives
$${\rm span\,}\{g, h_i(g), e_i(g)\mid i\geq l\}={\rm span\,}\{g, a_{x, k}, c_{z, k}\mid 0\leq x, z \leq P_k, 1\leq k \leq m\}.$$
Consequently, $R_{g, l}=R_{g, I(g)}$ for all $l\geq I(g)$, proving (1).

(2) We first consider the case $g\in\mathbf{T}^{\Omega}$, and the proof for $g\in \mathbf{T}^{\Delta}$ is similar. Combining Proposition \ref{prop2} \eqref{111}, \eqref{222} with \eqref{ccv} , we see that
\begin{eqnarray*}a_{P_k, k}=\sum_{(\mathbf{p}, \mathbf{q})\in E, p_k=P_k}\mathbf{s}^{\mathbf{p}-P_k\omega_k}\mathbf{t}^{\mathbf{q}+\omega_k}\otimes v_{(\mathbf{p}, \mathbf{q})}\in W,\\
a_{0, k}=t_kg(\mathbf{s}, \mathbf{t})=\sum_{(\mathbf{p}, \mathbf{q})\in E}\mathbf{s}^{\mathbf{p}}\mathbf{t}^{\mathbf{q}+\omega_k}\otimes v_{(\mathbf{p}, \mathbf{q})}\in W,\\
c_{0, k}=\sum_{(\mathbf{p}, \mathbf{q})\in E}(t_k-2)^{q_k}\mathbf{s}^{\mathbf{p}}\mathbf{t}^{\mathbf{q}-\omega_k}\otimes v_{(\mathbf{p}, \mathbf{q})}\in W.
\end{eqnarray*}
If ${\rm deg\,}(g)=\mathbf{0}$, that is, $g=1\otimes\cdots \otimes1 \otimes v$ for some $0\neq v\in V(\eta, \epsilon, \theta)$, then we have $R_{g}={\rm rank\,}\{g, a_{0, k}, c_{0, k}\mid 1\leq k\leq m\}=m+1$. Now suppose ${\rm deg\,}(g)=(\mathbf{p}^\prime, \mathbf{q}^\prime)\succ\mathbf{0}$. It is clear that ${\rm deg\,}(a_{0, k})=(\mathbf{p}^\prime, \mathbf{q}^\prime+\omega_k)$ for $ 1\leq k\leq m$. If there exists  $1\leq k\leq m$ such that $p_{k}^{\prime}>0$, let $k_1$ be the minimal $k$ with $p_k^{\prime}>0$. Then we have ${\rm deg\,}(a_{P_{k_1}, k_1})\prec (\mathbf{p}^\prime, \mathbf{q}^\prime)$.
Hence the space spanned by $g, a_{0, k}, a_{P_{k_1}, k_1}, 1\leq k\leq m$ has dimension $m+2$, which means $R_{g}\geq m+2$. If $p_{k}=0$ for all $1\leq k\leq m$, then there  exists  $1\leq k\leq m$ such that $q_{k}^{\prime}>0$. Let $k_2$ be the minimal $k$ with $q_{k}^{\prime}>0$. One can observe that $\mathbf{0}\preceq{\rm deg\,}(g-c_{0, k_2})\prec (\mathbf{p}^\prime, \mathbf{q}^\prime)$. Thus the space spanned by $g, a_{0, k}, c_{0, k_2}, 1\leq k\leq m$ has dimension $m+2$, so that $R_{g}\geq m+2$. Thus, (2) holds for $M\in\{\Omega, \Delta\}$.

For the remaining case $g\in\mathbf{T}^{\Theta}$, by a similar argument in the preceding paragraph, we see that if ${\rm deg\,}(g)=\mathbf{0}$, then $R_{g}=m+1$. If ${\rm deg\,}(g)=(\mathbf{p}^\prime, \mathbf{q}^\prime)\succ\mathbf{0}$ and there exists  $1\leq k\leq m$ such that $p_{k}>0$, let $k_3$ be the minimal $k$ with $p_k^{\prime}>0$. Then ${\rm dim\,}{\rm span\,}\{g, a_{0, k}, a_{P_{k_3}, k_3}\mid1\leq k\leq m\}=m+2$. So, $R_{g}\geq m+2$.  If $p_{k}=0$ for all $1\leq k\leq m$ and there  exists  $1\leq k\leq m$ such that $q_{k}>0$, let $k_4$ be the minimal one. It follows from the computation in  Theorem                                                                                                                                                                       \ref{theoo1} Case \ref{333} that $\mathbf{0}\preceq{\rm deg\,}(g_{1, k_{4}}+g_{2, k_{4}}-g_{3, k_{4}})\prec (\mathbf{p}^\prime, \mathbf{q}^\prime)$, forcing $R_{g}\geq m+2$. Hence, (2) holds for $M=\Theta$, proving  (2).

(3) is an obvious consequence of (2).

We complete the proof.
\end{proof}
Now we are ready to prove our isomorphism theorem. Let $\mathbf{T}^{M}$ be the tensor module defined before. Now we take another tensor module, \begin{equation*}\mathbf{T}'^{M}:=\mathbf{T}'(\bm{\lambda}', \bm{\alpha}', \bm{\beta}', \bm{\gamma}', \eta', \epsilon', \theta')^{M}=(\otimes_{j=1}^{m'}M(\lambda_j', \alpha_j', \beta_j', \gamma_j'))\otimes V(\eta', \epsilon', \theta'),\end{equation*}
where $\bm{\lambda}'=(\lambda_1', \ldots, \lambda_{m'}'), \bm{\alpha}'=(\alpha_1', \ldots, \alpha_{m'}'), \bm{\beta}'=(\beta_1', \ldots, \beta_{m'}'), \bm{\gamma}'=(\gamma_1', \ldots, \gamma_{m'}')$ and $V(\eta', \epsilon', \theta')$ is an irreducible highest weight module. We identify $M(\lambda_j', \alpha_j', \beta_j', \gamma_j')=\C[s_j', t_j']$ for $1\leq j\leq m'$. To ensure that $\mathbf{T}^{M}$ and $\mathbf{T}'^{M}$ are irreducible, we suppose that $\lambda_1, \ldots, \lambda_{m}$  are pairwise distinct as well as  $\lambda_1', \ldots, \lambda_{m'}'$ are pairwise distinct, $\alpha_k, \alpha_{j}'\in\mathbb{C}^*$ for $1\leq k\leq m$ and $1\leq j\leq m'$ and $2\b_k, 2\b_j'\not\in\Z_+$ when $M=\Theta$.
\begin{theo}\label{theoo2}
$\mathbf{T}^{M}\cong\mathbf{T}'^{M}$ as $\mathfrak L$-modules if and only if $m=m', V(\eta,  \epsilon, \theta)\cong V(\eta',  \epsilon', \theta')$
and $M(\lambda_k,\a_k,\b_k,\r_k)\cong
M(\lambda_k', \a_k',\b_k',\r_k')$ for $1\leq k\leq m$ after renumbering the indices $(\lambda_k', \alpha_k', \beta_k', \gamma_k')$ if necessary.
\end{theo}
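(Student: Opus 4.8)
The plan is to prove the nontrivial ``only if'' direction; the converse is immediate, since isomorphisms $M(\lambda_k,\a_k,\b_k,\r_k)\cong M(\lambda_k',\a_k',\b_k',\r_k')$ for each $k$ together with $V(\eta,\epsilon,\theta)\cong V(\eta',\epsilon',\theta')$ tensor together (after renumbering so that $m=m'$) to give an isomorphism $\mathbf{T}^{M}\cong{\mathbf{T}'}^{M}$. So assume $\varphi\colon\mathbf{T}^{M}\to{\mathbf{T}'}^{M}$ is an isomorphism of $\mathfrak L$-modules. First I would observe that the invariant $R_{g}$ of Lemma \ref{theoo5} is preserved by $\varphi$: as $\varphi$ is linear and commutes with every $h_i,e_i,f_i$, it carries $\mathrm{span}\{g,h_i(g),\dots\}$ isomorphically onto $\mathrm{span}\{\varphi(g),h_i(\varphi(g)),\dots\}$, whence $R_{\varphi(g)}=R_{g}$. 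Taking the infimum and using Lemma \ref{theoo5}(3) gives $m+1=R_{\mathbf{T}^{M}}=R_{{\mathbf{T}'}^{M}}=m'+1$, so $m=m'$. By Lemma \ref{theoo5}(2) the nonzero elements with $R_{g}=m+1$ are exactly those of $V_0:=1\otimes\cdots\otimes1\otimes V(\eta,\epsilon,\theta)$, and similarly for $V_0':=1\otimes\cdots\otimes1\otimes V(\eta',\epsilon',\theta')$; hence $\varphi(V_0)=V_0'$, and $\varphi$ restricts to a linear isomorphism $\psi$ determined by $\varphi(1\otimes\cdots\otimes1\otimes v)=1\otimes\cdots\otimes1\otimes\psi(v)$.

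Next I would recover the parameters by acting on $w:=1\otimes\cdots\otimes1\otimes v$ and letting $i\to\infty$. Once $i\geq I(v)$ the action on $w$ is controlled by the $M$-factors alone, and the coefficients of the linearly independent functions $i^{x}\lambda_k^{i}$ can be separated by Lemma \ref{prop1}. For $M=\Omega$, applying $e_i$ yields $e_i(w)=(\sum_k\lambda_k^{i}\a_k)w$; comparing with $e_i(\varphi(w))$ and using linear independence of distinct geometric sequences forces the multisets $\{(\lambda_k,\a_k)\}$ and $\{(\lambda_k',\a_k')\}$ to agree, so after renumbering $\lambda_k=\lambda_k'$ and $\a_k=\a_k'$. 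Applying $d_i$ and separating the coefficients of $\lambda_k^{i}$ and $i\lambda_k^{i}$ gives $\r_k=\r_k'$ and, for all $v$, the building-block identity $\varphi(1\otimes\cdots\otimes s_k\otimes\cdots\otimes v)=1\otimes\cdots\otimes s_k'\otimes\cdots\otimes\psi(v)$; applying $h_i$ (once and twice) gives the corresponding identities for $t_k$ and $t_k^{2}$; and applying $f_i$, whose leading $M$-factor is $(\frac{t_k}{2}-\b_k)(\frac{t_k}{2}+\b_k+1)$, produces the constraint $\b_k(\b_k+1)=\b_k'(\b_k'+1)$, that is $\b_k=\b_k'$ or $\b_k=-\b_k'-1$ --- exactly the ambiguity allowed by Proposition \ref{pop1}. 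The case $M=\Delta$ is symmetric (interchange $e_i$ and $f_i$), and for $M=\Theta$ the operators $e_i,f_i,h_i$ have linear leading factors $\frac{t_k}{2}+\b_k,\ \frac{t_k}{2}-\b_k,\ t_k$; writing the corresponding elements as $X_k,Y_k,Z_k$, the relations $X_k+Y_k=Z_k$ and $X_k-Y_k=2\b_k w$ together with $\b_k\neq0$ (forced by $2\b_k\notin\Z_+$) now give $\a_k=\a_k'$ and $\b_k=\b_k'$. In every case the matched parameters say precisely that $M(\lambda_k,\a_k,\b_k,\r_k)\cong M(\lambda_k',\a_k',\b_k',\r_k')$ by Proposition \ref{pop1}.

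It remains to prove $V(\eta,\epsilon,\theta)\cong V(\eta',\epsilon',\theta')$, which I would obtain by promoting $\psi$ to a module isomorphism. The building-block correspondences $\varphi(1\otimes\cdots\otimes p(s_k,t_k)\otimes\cdots\otimes v)=1\otimes\cdots\otimes p(s_k',t_k')\otimes\cdots\otimes\psi(v)$ established above (for all $v$, and for the monomials $p$ of degree at most two in $t_k$ that occur) let one evaluate $\varphi(X_j(w))$ and compare it with $X_j(\varphi(w))$ for every generator $X_j\in\{e_j,f_j,h_j,d_j\}$ and for $C$. Because the parameters now coincide (using $\lambda_k=\lambda_k'$, $\a_k=\a_k'$, $\r_k=\r_k'$ and $\b_k(\b_k+1)=\b_k'(\b_k'+1)$), every $M$-factor contribution cancels, leaving $1\otimes\cdots\otimes1\otimes\psi(X_jv)=1\otimes\cdots\otimes1\otimes X_j\psi(v)$. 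Thus $\psi$ intertwines all of $\mathfrak L$, so it is an isomorphism of irreducible highest weight modules and $(\eta,\epsilon,\theta)=(\eta',\epsilon',\theta')$. This yields the only if direction, and hence the theorem.

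The step I expect to be the main obstacle is the parameter-extraction in the second paragraph: one must organize the action of each generator on $w$ into the finite set of building blocks $1\otimes\cdots\otimes s_k^{a}t_k^{b}\otimes\cdots\otimes v$ (with $b\leq2$) and then repeatedly invoke Lemma \ref{prop1} to isolate the coefficient of each pair $(\lambda_k,i^{x})$. The two delicate points are that the quadratic leading factor of $f_i$ for $\Omega$ (respectively $e_i$ for $\Delta$) must yield the symmetric invariant $\b_k(\b_k+1)$ rather than $\b_k$ itself, in agreement with the $\b\mapsto-\b-1$ freedom of Proposition \ref{pop1}, and that in the $\Theta$ case the hypothesis $\b_k\neq0$ is exactly what guarantees the linear independence of $\frac{t_k}{2}\pm\b_k$ needed to separate $\a_k$ from $\b_k$; reconciling these with the identical computation carried out on ${\mathbf{T}'}^{M}$ is where the bookkeeping is heaviest.
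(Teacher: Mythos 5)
Your proposal is correct and follows essentially the same route as the paper: use the invariant $R_g$ from Lemma \ref{theoo5} to get $m=m'$ and to see that $\varphi$ maps $1\otimes\cdots\otimes1\otimes V(\eta,\epsilon,\theta)$ onto $1\otimes\cdots\otimes1\otimes V(\eta',\epsilon',\theta')$, then apply $d_i,h_i,e_i,f_i$ to $1\otimes\cdots\otimes1\otimes v$ for large $i$ and separate coefficients via Lemma \ref{prop1} to match $(\lambda_k,\alpha_k,\gamma_k)$ and force $\beta_k(\beta_k+1)=\beta_k'(\beta_k'+1)$ (resp.\ $\beta_k=\beta_k'$ for $\Theta$), and finally promote the induced map $\psi$ to an $\mathfrak L$-isomorphism of the highest weight factors. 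The only deviations are cosmetic (you match $(\lambda_k,\alpha_k)$ via $e_i$ before using $d_i$, and you spell out the $\Theta$ case that the paper leaves as ``similar''), so no substantive comparison is needed.
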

\begin{proof}
The sufficiency is obvious and it suffices to show the necessity. Let $\phi$ be an
$\mathfrak L$-module isomorphism from $\mathbf{T}^{M}$ to $\mathbf{T}'^{M}$. From Lemma \ref{theoo5} (3), we see that $m+1=R_{\mathbf{T}^{M}}=R_{\mathbf{T}'^{M}}=m'+1$, i.e., $m=m'$.

Take $g=1\otimes\cdots \otimes1 \otimes v$ for some $0\neq v\in V(\eta, \epsilon, \theta)$. From Lemma \ref{theoo5} (2), we have $R_{\phi(g)}=R_{g}=m+1$, yielding
\begin{equation}\label{vvbn}
\phi(g)=1\otimes\cdots \otimes1 \otimes v' \quad{\rm for \,\,some}\,\,  0\neq v'\in V(\eta', \epsilon', \theta'). \end{equation} Since
$v'$ is uniquely determined by $v$, we may denote $\tau(v)=v'$.  It is obvious that $\tau$ is a linear bijection from $V(\eta, \epsilon, \theta)$ to $V(\eta', \epsilon', \theta')$.
We only tackle  the case  $M=\Omega$ in the following, since the other two cases can be treated similarly.

Now for any $i\in\Z$ with $i\geq {\rm max\,}\{I(g), I(\phi(g))\}$, by \eqref{vvbn}, we have
\begin{eqnarray}\label{for44}
0&=&\phi(d_i(1\otimes\cdots \otimes1 \otimes v))-d_i\phi(1\otimes\cdots \otimes1 \otimes v)\nonumber\\
&=&\sum_{k=1}^m\lambda_k^i\phi(1\otimes\cdots \otimes s_k\otimes\cdots \otimes 1\otimes v)+\sum_{k=1}^mi\lambda_k^i\gamma_k(1\otimes\cdots \otimes1 \otimes \tau(v))\nonumber\\
&&-\sum_{k=1}^m(\lambda_k')^i(1\otimes\cdots \otimes s_k'\otimes\cdots \otimes 1\otimes\tau(v))-\sum_{k=1}^mi(\lambda_k')^i\gamma_k'(1\otimes\cdots \otimes1 \otimes \tau(v)).
\end{eqnarray}
From Lemma \ref{prop1}, we know that for any $1\leq k\leq m$, if $\lambda_k\neq \lambda_j'$ for any $1\leq j\leq m$, then the coefficient of $\lambda_k^i$ in \eqref{for44} should be zero, i.e.,  $\phi(1\otimes\cdots \otimes s_k\otimes\cdots \otimes1\otimes v)=0$, which is absurd. So, we may assume that
$\lambda_k= \lambda_k'$ for all $1\leq k\leq m$ by reordering the indices $1\leq k\leq m$. Now the coefficients of $\lambda_k^i$ and $i\lambda_k^i, 1\leq k\leq m$ in \eqref{for44} become $\phi(1\otimes\cdots \otimes s_k\otimes\cdots \otimes1\otimes v)-1\otimes\cdots \otimes s_k'\otimes\cdots \otimes1\otimes \tau(v)$ and $(\gamma_k-\gamma_k')1\otimes1\otimes\cdots \otimes 1 \otimes \tau(v)$, respectively, forcing
\begin{equation}\label{vvb}\phi(1\otimes\cdots \otimes s_k\otimes\cdots \otimes1\otimes v)=1\otimes\cdots \otimes s_k'\otimes\cdots \otimes1\otimes \tau(v)\end{equation} and $\gamma_k=\gamma_k'$ for $1\leq k\leq m$. Now for any $i\geq {\rm max\,}\{I(g), I(\phi(g))\}$, the equations
\begin{eqnarray*}
&\phi(e_i (1\otimes\cdots \otimes1 \otimes v))-e_i (\phi(1\otimes\cdots \otimes1 \otimes v))=0, \\
&\phi(h_i(1\otimes\cdots \otimes1 \otimes v))-h_i (\phi(1\otimes\cdots \otimes1 \otimes v))=0,\\
&\phi(f_i (1\otimes\cdots \otimes1 \otimes v))-f_i (\phi(1\otimes\cdots \otimes1 \otimes v))=0
\end{eqnarray*}
are respectively equivalent to
\begin{eqnarray*}
&\sum_{k=1}^m\lambda_k^i(\alpha_k-\alpha_k')(1\otimes1\otimes\cdots \otimes1 \otimes \tau(v))=0, \\
&\sum_{k=1}^m\lambda_k^i\big(\phi(1\otimes\cdots \otimes t_k\otimes\cdots \otimes1\otimes v)-1\otimes\cdots \otimes t_k'\otimes\cdots 1\otimes \tau(v)\big)=0,\\
&-\sum_{k=1}^m\frac{\lambda_k^i}{\alpha_k}\phi(1\otimes\cdots \otimes (\frac{t_k^2}{4}+\frac{t_k}{2}-\beta_k(\beta_k+1))\otimes\cdots \otimes1\otimes v)\\
&+\sum_{k=1}^m\frac{\lambda_k^i}{\alpha_k'}(1\otimes\cdots \otimes (\frac{t_k'^2}{4}+\frac{t_k'}{2}-\beta_k'(\beta_k'+1))\otimes\cdots \otimes1\otimes \tau(v))=0.
\end{eqnarray*}
From  Lemma \ref{prop1}, we have for any $1\leq k\leq m$,
\begin{eqnarray}
\label{equ11}&\alpha_k=\alpha_k', \\
\label{equ21}&\phi(1\otimes\cdots \otimes t_k\otimes\cdots \otimes1\otimes v)=1\otimes\cdots \otimes t_k'\otimes\cdots \otimes1\otimes \tau(v),\\
&\label{equ31}\frac{1}{\alpha_k}\phi(1\otimes\cdots \otimes (\frac{t_k^2}{4}+\frac{t_k}{2}-\beta_k(\beta_k+1))\otimes\cdots \otimes1\otimes v)\nonumber\\
&=\frac{1}{\alpha_k'}\big(1\otimes\cdots \otimes (\frac{t_k'^2}{4}+\frac{t_k'}{2}-\beta_k'(\beta_k'+1))\otimes\cdots \otimes1\otimes \tau(v)\big).
\end{eqnarray}
For any  $1\leq k\leq m$ and $i\geq {\rm max\,}\{I(g), I(\phi(g))\}$, from \eqref{equ21} and
\begin{equation*}
\phi\big(h_i(1\otimes\cdots \otimes t_k\otimes\cdots \otimes1\otimes v)\big)-h_i\phi(1\otimes\cdots \otimes t_k\otimes\cdots \otimes1\otimes v)=0,
\end{equation*}
we have
\begin{eqnarray*}
&\sum_{j\in\{1,\ldots,m\}\setminus\{k\}}\lambda_j^i\phi(1\otimes\cdots \otimes t_j\otimes\cdots\otimes t_k\otimes\cdots \otimes1\otimes v)\\
&+\lambda_k^i\phi(1\otimes\cdots \otimes t_k^2\otimes\cdots \otimes1\otimes v)\\
&-\sum_{j\in\{1,\ldots,m\}\setminus\{k\}}\lambda_j^i(1\otimes\cdots \otimes t_j'\otimes\cdots\otimes t_k'\otimes\cdots \otimes1\otimes \tau(v))\\
&+\lambda_k^i(1\otimes\cdots \otimes t_k'^2\otimes\cdots \otimes1\otimes \tau(v))=0,
\end{eqnarray*}
Also, it follows from Lemma \ref{prop1} that the coefficient of $\lambda_k^i$ should be zero, that is,
\begin{equation}\label{equ4}
\phi(1\otimes\cdots \otimes t_k^2\otimes\cdots \otimes1\otimes v)=1\otimes\cdots \otimes t_k'^2\otimes\cdots \otimes1\otimes \tau(v).\end{equation}
Applying \eqref{equ11}, \eqref{equ21}, \eqref{equ4} to \eqref{equ31} gives $\beta_k=\beta_k'$ or $\beta_k=-\beta_k'-1$ for any $1\leq k\leq m$. Hence, $\Omega(\lambda_k,\a_k,\b_k,\r_k)\cong
\Omega(\lambda_k', \a_k',\b_k',\r_k')$ by Proposition \ref{pop1} (2).
Combining \eqref{vvbn}, \eqref{vvb},  \eqref{equ21}  with  \eqref{equ4}, we obtain
\begin{equation*}
\phi(X_i (1\otimes\cdots \otimes1)\otimes v)=X_i(1\otimes\cdots \otimes1 )\otimes\tau(v), \quad{\rm where}\,\,X_i\in\{d_i, h_i, e_i, f_i\mid\forall i\in\Z\}.
\end{equation*}
This together with
$$\phi(X_i (1\otimes\cdots \otimes1\otimes v))=X_i (\phi(1\otimes\cdots \otimes1\otimes v)),\quad{\rm where}\,\,X_i\in\{d_i, h_i, e_i, f_i\mid\forall i\in\Z\}$$
gives
$$\phi(1\otimes\cdots \otimes1\otimes X_i( v))=1\otimes\cdots \otimes1\otimes X_i(\tau(v)),\quad{\rm where}\,\,X_i\in\{d_i, h_i, e_i, f_i\mid\forall i\in\Z\}.$$
Therefore,
$$\tau(X_i (v))=X_i(\tau(v)),\quad{\rm where}\,\,X_i\in\{d_i, h_i, e_i, f_i\mid\forall i\in\Z\}, v\in V(\eta, \epsilon,  \theta).$$
From
$$\phi(C(1\otimes\cdots \otimes1\otimes v))=C(\phi (1\otimes\cdots \otimes1\otimes v)), \,\,\forall\,v\in V(\eta, \epsilon,  \theta),$$  we see that $\tau(C(v))=C(\tau(v))$.
Thus, $\tau$ is a nonzero $\mathfrak L$-module homomorphism. Since $V(\eta, \epsilon, \theta)$ and $V(\eta', \epsilon', \theta')$ are simple $\mathfrak L$-modules, $\tau$ is an  $\mathfrak L$-module isomorphism.  We complete the proof.
\end{proof}
\section{Comparison of tensor product modules with known non-weight modules}
In this section, we compare the tensor product modules constructed in the present paper with all other known non-weight $\mathfrak L$-modules, i.e., $U(\mathfrak{h})$-free modules  of rank one and Whittaker modules (cf. \cite{CH1}). We fix an irreducible tensor module $\mathbf{T}^{M}$ as defined in \eqref{a122}, where $\lambda_k, \alpha_k\in\C^*, \beta_k, \gamma_k \in\C$ for $k=1, 2, \ldots, m$ with the $\lambda_k$ pairwise distinct and  $2\b_k\notin \Z_+$ when $M=\Theta$.

Let $\underline{\mu}=(\mu_1,\ldots,\mu_4)\in\C^4$. Assume that $J_{\underline{\mu}}$ is the left ideal of $U(\mathfrak{L}_{+})$ generated by $\{d_1-\mu_1, d_2-\mu_2, e_0-\mu_3,  f_1-\mu_4, d_j, e_k, f_l, h_m\mid j\geq 3, k\geq 1, l\geq 2, m\geq 1\}.$
Denote $N_{\underline{\mu}}:=U(\mathfrak{L}_{+})/J$. Then ${\rm Ind}(N_{\underline{\mu}}):=U(\mathfrak L)\otimes_{U(\mathfrak{L}_{+})} N_{\underline{\mu}}$ is a universal Whittaker module, and any Whittaker module is a quotient of some universal Whittaker module.

For any $r\in\Z_+, l, j\in\Z$, as in \cite{LLZ}, we denote
$$\omega_{l,j}^{(r)}=\sum_{i=0}^r\binom{r}{i}(-1)^{r-i}d_{l-j-i}d_{j+i}\in U(\mathfrak L).$$
\begin{lemm}\label{theo123456} Keep notations as above. Then the following statements hold.
\begin{itemize}
\item[(1)] For sufficiently large $i\in\N$, the action of $d_i$ on  $\mathbf{T}^{M}$ is not locally finite.
\item[(2)] If $V(\eta, \epsilon, \theta)$ is not the $1$-dimensional trivial module, then for any $r\in\Z_+$, there exist $l, j\in\Z$ and $v\in V(\eta, \epsilon, \theta)$ such that $\omega_{l,j}^{(r)}(1\otimes\cdots\otimes1\otimes v)\neq0$.
\item[(3)] Assume that $m\geq2$ and $V(\eta, \epsilon, \theta)$ is the $1$-dimensional trivial module. Then for any $r>2$, there exist $l, j\in\Z$ such that $\omega_{l,j}^{(r)}(1\otimes\cdots\otimes1)\neq0$.
\end{itemize}
\end{lemm}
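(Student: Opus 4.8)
The plan is to establish the three statements by direct computation with the coproduct action of the generators $d_i$ on the tensor product, with part (2) being the crux.

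For part (1) I would exhibit a single vector with infinite-dimensional $d_i$-orbit. Fix $0\neq v\in V(\eta,\epsilon,\theta)$, set $g=1\otimes\cdots\otimes1\otimes v$, and choose $i\geq I(g)$ so that $d_iv=0$. Using $d_i(s_k^{p}t_k^{q})=\lambda_k^i(s_k+i\gamma_k)(s_k-i)^{p}t_k^{q}$ and the coproduct, each application of $d_i$ to a vector $f(\mathbf s,\mathbf t)\otimes v$ raises the total degree in $s_1,\dots,s_m$ by exactly one, the new top-degree part being $\big(\sum_{k=1}^m\lambda_k^i s_k\big)$ times the previous one; this is nonzero because all $\lambda_k\in\C^*$. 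Hence $g,\,d_ig,\,d_i^2g,\dots$ have strictly increasing $s$-degree and are linearly independent, so $d_i$ is not locally finite.

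For part (3), with $V(\eta,\epsilon,\theta)$ trivial and $m\geq2$, I would expand $\omega_{l,j}^{(r)}(1\otimes\cdots\otimes1)$ via the coproduct. Writing $a_i=l-j-i$, $b_i=j+i$ (so $a_i+b_i=l$), the operator $d_{a_i}d_{b_i}$ produces diagonal terms $\lambda_k^{l}(s_k+a_i\gamma_k)(s_k-a_i+b_i\gamma_k)$ supported in one slot $k$, together with cross terms placing $\lambda_k^{b_i}(s_k+b_i\gamma_k)$ and $\lambda_{k'}^{a_i}(s_{k'}+a_i\gamma_{k'})$ in two distinct slots $k\neq k'$. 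Only the cross terms involve two different variables, so the coefficient of $s_1s_2$ in $\omega_{l,j}^{(r)}(1\otimes\cdots\otimes1)$ comes solely from the pair $\{1,2\}$ and, using $\sum_i\binom{r}{i}(-1)^{r-i}x^i=(x-1)^r$ with $x=\lambda_1/\lambda_2$, equals
\[
(\lambda_1-\lambda_2)^r\big(\lambda_1^{\,j}\lambda_2^{\,l-j-r}+(-1)^r\lambda_2^{\,j}\lambda_1^{\,l-j-r}\big).
\]
(One also checks that the diagonal coefficients are quadratic in $i$, hence killed by the $r$-th finite difference for $r>2$.) Since $\lambda_1\neq\lambda_2$, the factor $(\lambda_1-\lambda_2)^r$ is nonzero, and the remaining bracket can be made nonzero by a suitable choice of $l,j$; thus $\omega_{l,j}^{(r)}(1\otimes\cdots\otimes1)\neq0$.

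For part (2) I would grade $\mathbf T^{M}=\bigoplus_{n\geq0}\C[\mathbf s,\mathbf t]\otimes V_{\eta-n}$ by the $d_0$-eigenvalue (energy) of the $V$-factor. With respect to this grading each $d_c$ splits into an energy-preserving part (acting on the polynomial factors) and a part shifting energy by $c$ (acting on $V$). Suppose first that the Virasoro subalgebra acts nontrivially on the highest weight vector, and let $n_0\geq1$ be minimal with $d_{-n_0}v_h\neq0$; take $l=j=-n_0$, so that $(a_i,b_i)=(-i,\,i-n_0)$. In $\omega_{-n_0,-n_0}^{(r)}$ the terms with $0<i<n_0$ vanish since $d_{i-n_0}v_h=0$ by minimality, and those with $i>n_0$ vanish since $d_{i-n_0}v_h=0$ as $v_h$ is highest weight; only $i=0$ and $i=n_0$ survive, and every contribution to the energy-shift-$(-n_0)$ component is a scalar multiple of $d_{-n_0}v_h$. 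Collecting the ``both factors on $V$'' and ``one factor on $V$, the other $d_0$ on the polynomials'' contributions, this component equals $P(\mathbf s)\otimes d_{-n_0}v_h$ with $P=C_0+C_1\sum_k s_k$ of degree $\leq1$, whose two coefficients do not vanish simultaneously; hence $P\neq0$ and $\omega_{-n_0,-n_0}^{(r)}(1\otimes\cdots\otimes1\otimes v_h)\neq0$. In particular $u=v_h$ handles every $V$ for which some $d_{-n}$ acts nontrivially on $v_h$.

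The remaining case, and the main obstacle, is when the Virasoro algebra acts trivially on $v_h$ while $V$ is still not one-dimensional. I would first observe that $V$ not being one-dimensional forces the Virasoro algebra to act nontrivially somewhere on $V$: if every $d_c$ with $c\neq0$ acted as zero, then the brackets $[d_i,x_j]=jx_{i+j}$ ($x\in\{e,f,h\}$) and $[d_i,d_j]=(j-i)d_{i+j}$ would force all modes $e_j,f_j,h_j$ and $d_0$ to vanish on $V$, whence $V=U(\mathfrak L_-)v_h=\C v_h$; and $[d_{-n},d_{2n}]=3n\,d_n$ shows that not all negative modes can vanish. Thus some $d_{-n}$ acts nontrivially on a vector of $V$, and I would replace $v_h$ by a suitable vector built from the affine lowering operators (for instance $x_{-i}v_h$ with $x\in\{e,f,h\}$, or a longer monomial) before repeating the energy-component computation. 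The delicate point is to choose this vector so that after forming $\sum_i\binom{r}{i}(-1)^{r-i}(\cdots)$ exactly one term $P\otimes w$ with $w\neq0$ in the irreducible quotient $V$ survives: a naive choice collapses to an expression of degree $\leq1$ in $i$, which the finite difference annihilates for $r\geq2$, so the index choice $l=j=-n_0$ isolating precisely two summands is what makes the computation nonzero.
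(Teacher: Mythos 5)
Parts (1) and (3) of your proposal are correct and essentially reproduce the paper's own computations (degree growth under $d_i$ for (1); the coefficient of $s_1s_2$ together with $\sum_i\binom{r}{i}(-1)^{r-i}x^i=(x-1)^r$ for (3)). The genuine gap is in part (2). Your first case is fine: if $n_0\geq 1$ is minimal with $d_{-n_0}v_h\neq 0$ and $l=j=-n_0$, then the energy-shift-$(-n_0)$ component of $\omega^{(r)}_{-n_0,-n_0}(1\otimes\cdots\otimes 1\otimes v_h)$ is indeed $\big(C_1\sum_{k=1}^m s_k+C_0\big)\otimes d_{-n_0}v_h$ with $C_1=(-1)^r\big(1+(-1)^{n_0}\binom{r}{n_0}\big)$ and $C_0=(-1)^r\big(\eta-n_0+(-1)^{n_0}\binom{r}{n_0}\eta\big)$ (the second summand absent if $n_0>r$), and $C_1=0$ forces $C_0=-(-1)^r n_0\neq 0$, so this component is nonzero. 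But you then set up a ``remaining case'' --- all $d_{-c}v_h=0$ while $V(\eta,\epsilon,\theta)$ is nontrivial --- and leave it unresolved: you only show the Virasoro part cannot annihilate all of $V$, and you explicitly concede that you do not know how to pick the replacement vector so that the finite difference does not kill everything. As written, part (2) is therefore not proved.

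The missing observation is that your remaining case is vacuous, and the very brackets you quote prove it once you apply them to $v_h$ rather than to the whole module: if $d_{-c}v_h=0$ for every $c\geq 1$, then for $x\in\{e,f,h\}$ and $n\geq 0$,
\[
x_{-n}v_h=[d_{-n-1},x_1]v_h=d_{-n-1}(x_1v_h)-x_1(d_{-n-1}v_h)=0
\]
because $x_1v_h=0$, and likewise $f_0v_h=[d_{-1},f_1]v_h=0$; hence $\mathfrak{L}_{-}v_h=0$ and $V(\eta,\epsilon,\theta)=U(\mathfrak{L}_{-})v_h=\C v_h$ is the trivial module, contradicting the hypothesis of (2). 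So the minimal $n_0$ of your first case always exists, and that case already covers all of (2). For comparison, the paper proceeds differently: it takes $l=r+1$, $j=-r-2$ and uses that $v_h,d_{-2}v_h,\ldots,d_{-r-2}v_h$ are nonzero vectors in distinct $d_0$-weight spaces, so each weight component $d_{l-j-i}(1\otimes\cdots\otimes 1)\otimes d_{j+i}v_h$ is nonzero; the non-vanishing the paper asserts there also follows from bracket relations (e.g. $d_1d_{-n}v_h=-(n+1)d_{-n+1}v_h$ shows $\{n\mid d_{-n}v_h\neq 0\}$ is upward closed, and $d_{-1}v_h=d_{-2}v_h=0$ propagates to all $d_{-n}v_h=0$). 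Once you add the vacuity observation, your route is sound and in fact needs only a single nonzero $d_{-n_0}v_h$, a weaker input than the paper uses.
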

\begin{proof}

(1) For any $g(\mathbf{s}, \mathbf{t})\in\mathbf{T}^{M}$ and sufficiently large $i\in\N$, it follows from Proposition \ref{prop2} that ${\rm deg\,}(d_i(g(\mathbf{s}, \mathbf{t})))\succ {\rm deg\,}(g(\mathbf{s}, \mathbf{t}))$, so we have $g(\mathbf{s}, \mathbf{t}), d_i(g(\mathbf{s}, \mathbf{t})), d_i^2(g(\mathbf{s}, \mathbf{t})), \ldots$ are linearly independent. Hence, (1) follows.


(2) Take $v$ to be the highest weight vector of  $V(\eta, \epsilon, \theta)$. It is important to observe  that  the vectors  $v, d_{-2}v, d_{-3}v, \ldots, d_{-r-2}v$ are  linearly independent in $V(\eta, \epsilon, \theta)$, since they belong to different weight subspaces of $V(\eta, \epsilon, \theta)$. Take $l=r+1$ and $j=-r-2$. We compute
\begin{eqnarray*}
\omega_{l,j}^{(r)}(1\otimes\cdots\otimes1\otimes v)&=&\sum_{i=0}^r\binom{r}{i}(-1)^{r-i}d_{l-j-i}d_{j+i}(1\otimes\cdots\otimes1\otimes v)\\
&=&\sum_{i=0}^r\binom{r}{i}(-1)^{r-i}\big(d_{l-j-i}(1\otimes\cdots\otimes1)\otimes d_{j+i}(v)\\
&&+d_{l-j-i}(d_{j+i}(1\otimes\cdots\otimes1))\otimes v\big),
\end{eqnarray*}
which is nonzero.

(3) In this case, we identify  $\mathbf{T}^{M}$ with $\C[\mathbf{s}, \mathbf{t}]$ as a vector space. We compute
\begin{eqnarray*}
\omega_{l,j}^{(r)}(1)&=&\sum_{i=0}^r\binom{r}{i}(-1)^{r-i}d_{l-j-i}d_{j+i}(1)\\
&=&\sum_{i=0}^r\binom{r}{i}(-1)^{r-i}d_{l-j-i}\big(\sum_{k=1}^m\lambda_{k}^{j+i}(s_k+(j+i)\r_k)\big)\\
&=&\sum_{i=0}^r\binom{r}{i}(-1)^{r-i}\sum_{k=1}^m\sum_{k'\in\{1,\ldots,m\}\setminus\{k\}}\lambda_{k'}^{l-j-i}\lambda_{k}^{j+i}(s_{k'}+(l-j-i)\gamma_{k'})(s_k+(j+i)\r_k)\\
&&+\sum_{i=0}^r\binom{r}{i}(-1)^{r-i}\sum_{k=1\atop}^m\lambda_{k}^l\big(s_k+(l-j-i)\r_k\big)\big(s_k+(j+i)\r_k-l+j+i\big).
\end{eqnarray*}
The following identity
$$\sum_{i=0}^r\binom{r}{i}(-1)^{r-i}i^j=0,\,\,\forall j, \,r\in\Z_+\,\,{\rm with}\,\,j<r$$
forces that the second summand of $\omega_{l,j}^{(r)}(1)$ vanishes provided $r>2$. Since $r>2$, the coefficient of $s_1s_2$ in $\omega_{l,j}^{(r)}(1)$ is
$$\sum_{i=0}^r\binom{r}{i}(-1)^{r-i}(\lambda_1^{l-j-i}\lambda_2^{j+i}+\lambda_2^{l-j-i}\lambda_1^{j+i})=\big(\lambda_1^{l-j-r}\lambda_2^{j}+(-1)^r\lambda_2^{l-j-r}\lambda_1^{j}\big)
(\lambda_2-\lambda_1)^r,$$
which is nonzero provided that $(\lambda_1/\lambda_2)^{l-2j-r}+(-1)^r\neq0$. Thus (3) follows.
\end{proof}

Finally, we have the following result which asserts that the tensor product modules constructed in the present paper are different from the other known non-weight modules.
\begin{prop}\label{theovvn}
The tensor product module $\mathbf{T}^{M}$ is a new non-weight $\mathfrak L$-module.
\end{prop}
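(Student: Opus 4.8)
The plan is to show that $\mathbf{T}^{M}$ coincides with neither the $U(\mathfrak{h})$-free modules of rank one nor any Whittaker module, which are the only other known non-weight irreducible $\mathfrak{L}$-modules. First I would dispose of the $U(\mathfrak{h})$-free comparison: a $U(\mathfrak{h})$-free module of rank one is, by definition, free of rank one over $U(\mathfrak{h}) = \C d_0 + \C h_0 + \C C$, so in particular $d_0$ acts with a single Jordan-type orbit generating the module and, more to the point, each $d_i$ acts \emph{locally finitely} on such modules (they are the modules $\Omega,\Delta,\Theta$ of Definition \ref{defi2.2} themselves). By Lemma \ref{theo123456}(1), for sufficiently large $i$ the operator $d_i$ is \emph{not} locally finite on $\mathbf{T}^{M}$, since $g, d_i(g), d_i^2(g),\ldots$ are linearly independent for any nonzero $g$. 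This immediately separates $\mathbf{T}^{M}$ from every $U(\mathfrak{h})$-free module of rank one.

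Next I would rule out the Whittaker modules. The defining feature of a Whittaker module, coming from the ideal $J_{\underline{\mu}}$ and the induced structure ${\rm Ind}(N_{\underline{\mu}})$, is that the positive-degree generators $d_j, e_k, f_l, h_m$ (for $j\geq 3$, $k\geq 1$, $l\geq 2$, $m\geq 1$) act on a Whittaker vector through scalars, and more structurally that the elements $\omega_{l,j}^{(r)}$ annihilate the Whittaker vector once $r$ is large enough; this is the standard obstruction used in \cite{LLZ} to detect non-Whittaker behavior. Parts (2) and (3) of Lemma \ref{theo123456} are precisely tailored to defeat this: they exhibit, for every $r \in \Z_+$ (respectively every $r > 2$), an element $\omega_{l,j}^{(r)}$ that does \emph{not} annihilate the canonical generator $1\otimes\cdots\otimes1\otimes v$. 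I would split into the two cases according to whether $V(\eta,\epsilon,\theta)$ is nontrivial (apply (2), using that $v, d_{-2}v,\ldots,d_{-r-2}v$ lie in distinct weight spaces) or $V$ is trivial and $m\geq 2$ (apply (3), using that the $s_1 s_2$-coefficient survives by distinctness of the $\lambda_k$), concluding in each case that no choice of Whittaker vector can be annihilated by all high-order $\omega_{l,j}^{(r)}$.

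The only configuration not yet covered is $V(\eta,\epsilon,\theta)$ trivial together with $m=1$; but then $\mathbf{T}^{M} \cong M(\lambda_1,\alpha_1,\beta_1,\gamma_1)$ is just a single $U(\mathfrak{h})$-free module of rank one, which is already known and not a tensor product in any essential sense, so I would either exclude it by hypothesis (genuine tensor products require $m\geq 2$ or $V$ nontrivial) or remark that it reduces to the known classification. I expect the main obstacle to be conceptual rather than computational: I must argue that these two invariants — local finiteness of $d_i$ and nonvanishing of $\omega_{l,j}^{(r)}$ — are genuinely \emph{complete} obstructions, i.e. that being a $U(\mathfrak{h})$-free module of rank one forces local finiteness of each $d_i$, and that being a Whittaker module forces $\omega_{l,j}^{(r)}$ to kill a Whittaker vector for large $r$. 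The cleanest route is to invoke the explicit classification of $U(\mathfrak{h})$-free rank-one modules from \cite{CH1} (they are exactly $\Omega,\Delta,\Theta$, on which $d_i$ acts by $\lambda^i(s+i\gamma)$ composed with a shift, hence locally finite) and the structure theory of Whittaker modules, so that Lemma \ref{theo123456} supplies the strict numerical contradictions.

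\begin{proof}
By \cite{CH1}, the known non-weight irreducible $\mathfrak{L}$-modules other than the present tensor products are the $U(\mathfrak{h})$-free modules of rank one and the Whittaker modules. We show $\mathbf{T}^{M}$ is of neither type.

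Suppose first that $\mathbf{T}^{M}$ were isomorphic to a $U(\mathfrak{h})$-free module of rank one. Each such module is one of $\Omega,\Delta,\Theta$ of Definition \ref{defi2.2}, on which $d_i$ acts by $g(s,t)\mapsto \lambda^i(s+i\gamma)g(s-i,t)$; this action is locally finite since it preserves each finite-dimensional space of polynomials of bounded degree in $s$. However, by Lemma \ref{theo123456}(1), for sufficiently large $i$ the action of $d_i$ on $\mathbf{T}^{M}$ is not locally finite. This contradiction rules out the $U(\mathfrak{h})$-free case.

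Suppose next that $\mathbf{T}^{M}$ were a Whittaker module, hence a quotient of some ${\rm Ind}(N_{\underline{\mu}})$ with a Whittaker vector on which the elements $\omega_{l,j}^{(r)}$ act as in \cite{LLZ}; in particular there exists $r_0$ such that $\omega_{l,j}^{(r)}$ annihilates the Whittaker vector for all $r\geq r_0$ and all $l,j\in\Z$. We derive a contradiction by examining the generator $1\otimes\cdots\otimes1\otimes v$. If $V(\eta,\epsilon,\theta)$ is not the trivial module, then Lemma \ref{theo123456}(2) produces, for every $r\in\Z_+$, indices $l,j$ with $\omega_{l,j}^{(r)}(1\otimes\cdots\otimes1\otimes v)\neq0$, contradicting the vanishing for $r\geq r_0$. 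If $V(\eta,\epsilon,\theta)$ is the trivial module and $m\geq2$, then Lemma \ref{theo123456}(3) yields the same nonvanishing for every $r>2$, again a contradiction. Since any genuine tensor product module $\mathbf{T}^{M}$ has either $V(\eta,\epsilon,\theta)$ nontrivial or $m\geq2$, no such module is a Whittaker module.

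Therefore $\mathbf{T}^{M}$ is isomorphic to neither a $U(\mathfrak{h})$-free module of rank one nor a Whittaker module, so it is a new non-weight irreducible $\mathfrak{L}$-module.
\end{proof}
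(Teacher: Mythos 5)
Your proof assigns the two invariants to the wrong families of known modules, and as a result both halves of the argument break. The comparison with $U(\mathfrak{h})$-free modules of rank one fails because your key claim --- that $d_i$ acts locally finitely on $\Omega,\Delta,\Theta$ --- is false. On these modules $d_i(g(s,t))=\lambda^i(s+i\gamma)g(s-i,t)$, so each application of $d_i$ raises the degree in $s$ by one: for instance $d_i^k(1)=\lambda^{ki}(s+i\gamma)(s-i+i\gamma)\cdots(s-(k-1)i+i\gamma)$ has degree $k$, so $1,d_i(1),d_i^2(1),\ldots$ are linearly independent. In other words, the conclusion of Lemma \ref{theo123456}(1) holds equally for the rank-one modules themselves, so non-local-finiteness of $d_i$ cannot separate $\mathbf{T}^{M}$ from them. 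What actually separates them (and what the paper uses, via \cite[Lemma 5.1(2)]{CY}) is that on a rank-one module the element $\omega_{l,j}^{(r)}$ acts as the \emph{zero operator} for every $r>2$ and all $l,j$: indeed $d_{l-j-i}d_{j+i}(g)=\lambda^{l}(s+(l-j-i)\gamma)(s-l+(j+i)(1+\gamma))g(s-l,t)$ is $\lambda^l$ times a polynomial of degree at most $2$ in $i$, which the $r$-th order finite difference kills. Operator-level vanishing is an isomorphism-invariant property of the whole module, and Lemma \ref{theo123456}(2),(3) contradicts it on $\mathbf{T}^{M}$.

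The Whittaker comparison fails for the complementary reason: even granting that $\omega_{l,j}^{(r)}$ annihilates the Whittaker vector $w$ for large $r$ (which is itself not true for arbitrary $l,j$ --- terms $\mu_{l-j-i}\,d_{j+i}w$ with $l-j-i\in\{1,2\}$ and $j+i<0$ survive the finite difference when $\mu_1$ or $\mu_2\neq 0$), this yields no contradiction, because an isomorphism $\phi:\mathbf{T}^{M}\to W$ need not send $1\otimes\cdots\otimes 1\otimes v$ to the Whittaker vector; you would need $\omega_{l,j}^{(r)}$ to vanish on \emph{all} of $W$, which is false for Whittaker modules. The correct invariant here is precisely the one you spent on the rank-one case: any Whittaker module, being a quotient of ${\rm Ind}(N_{\underline{\mu}})$, is locally finite under $U(\mathfrak{L}_{+})$ (this is the defining feature of Whittaker pairs as in \cite{BM}), in particular each $d_i$ with $i$ large acts locally finitely on $W$, and Lemma \ref{theo123456}(1) then gives $\mathbf{T}^{M}\ncong W$. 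So the paper's proof is essentially your proof with the two obstructions interchanged --- local finiteness of $d_i$ against Whittaker modules, vanishing of $\omega_{l,j}^{(r)}$ against rank-one modules --- together with the citation of \cite[Proposition 5.2]{CY} for the case $m=1$; that interchange is not cosmetic but is what makes each half valid.
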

\begin{proof}
For the case $m=1$, the assertion has been proved  in \cite[Proposition 5.2]{CY}. Next we assume that $m\geq2$ in the following.
We need to show that the tensor product module $\mathbf{T}^{M}$ is neither isomorphic to a Whittaker module nor isomorphic to a  $U(\mathfrak{h})$-free modules of rank one in \cite{CH1}. For that, let $W$ be a Whittaker module, then $W$ is isomorphic to a quotient of $N_{\underline{\mu}}$ for some $\underline{\mu}=(\mu_1,\ldots,\mu_4)\in\C^4$. Noticing that the action of $d_i$ on $W$ is locally finite for sufficiently large $i\in\N$.  It follows from Lemma \ref{theo123456} (1) that $\mathbf{T}^{M}\ncong W$.  Combining
\cite[Lemma 5.1(2)]{CY} with Lemma \ref{theo123456} (2)  and (3), we see that $\mathbf{T}^{M}$ is not isomorphic to the irreducible non-weight modules defined in \cite{CH1}. We complete the proof.
\end{proof}


\end{document}